\def\@tocline#1#2#3#4#5#6#7{\relax
	\ifnum #1>\c@tocdepth 
	\else
	\par \addpenalty\@secpenalty\addvspace{#2}%
	\begingroup \hyphenpenalty\@M
	\@ifempty{#4}{%
		\@tempdima\csname r@tocindent\number#1\endcsname\relax
	}{%
		\@tempdima#4\relax
	}%
	\parindent\z@ \leftskip#3\relax \advance\leftskip\@tempdima\relax
	\rightskip\@pnumwidth plus4em \parfillskip-\@pnumwidth
	#5\leavevmode\hskip-\@tempdima
	\ifcase #1
	\or\or \hskip 1em \or \hskip 2em \else \hskip 3em \fi%
	#6\nobreak\relax
	\dotfill\hbox to\@pnumwidth{\@tocpagenum{#7}}\par
	\nobreak
	\endgroup
	\fi}
\newtheorem{theorem}{Theorem}[section]
\newtheorem{corollary}[theorem]{Corollary}
\newtheorem{lemma}[theorem]{Lemma}
\newtheorem{proposition}[theorem]{Proposition}
\newtheorem{notation}[theorem]{Notation}
\newtheorem{definition}[theorem]{Definition}
\theoremstyle{definition}
\newtheorem{example}[theorem]{Example}
\newtheorem{remark}[theorem]{Remark}
 \newcommand{\N}{{\mathbb N}}
 \newcommand{\R}{{\mathbb R}}
\newcommand{\Q}{{\mathbb Q}}
 \newcommand{\HH}{\mathbb{H}}
\newcommand{\hh}{\mathbb{H}}
\newcommand{\oo}{\mathbb{O}}
\begin{document}
\title[]{On the first-order theories of quaternions\\ and octonions}

\author{Enrico Savi}
\address{Université Côte d'Azur - Laboratoire J. A. Dieudonné, Parc Valrose, 28 Avenue Valrose 06108 Nice (FRANCE)}
\email{enrico.savi@univ-cotedazur.fr}
\thanks{The author is supported by GNSAGA of INDAM, by the ANR NewMIRAGE (ANR-23-CE40-0002) and has been supported by ANR and IdEX of Universit\'e C\^ote d'Azur (IDEX UCAJedi ANR‑15‑IDEX‑01) during the development of this work.}

\subjclass[2010]{03C60, 03C64, 03C10, 16K20, 17A35, 30G35}
\keywords{Model-theoretic algebra; centrally finite alternative division rings; algebraic closure; quaternions; octonions; real closed fields; completeness; model completeness; fundamental theorem of algebra; polynomials of hypercomplex variables.}

\begin{abstract}
	Let $L$ be the language of rings. We provide an axiomatization of the $L$-theories of quaternions and octonions and characterize their models: they coincide, up to isomorphism, with quaternion and octonion algebras over a real closed field, respectively. We bi-interpret these theories in terms of real closed fields and we prove they are complete, model complete and they do not have quantifier elimination. Then, we focus on the class of ordered polynomials. Over $\mathbb{H}$ and $\mathbb{O}$ these polynomials are of special interest in hypercomplex analysis since they are slice regular. We deduce some fundamental properties of their zero loci from model completeness and we introduce the notions of algebraic sets and Zariski topology. Finally, we prove the failure of quantifier elimination for the fragment of ordered formulas and we completely characterize the family of algebraic sets.
\end{abstract}

\maketitle 
\tableofcontents


\section*{Introduction}

\subsection*{Interactions between model theory \& geometry: a brief overview} In real and complex geometry, both algebraic and analytic, model theory constitutes a fundamental point of view on the subject. In the algebraic setting, the quantifier elimination result established by Chevalley in the complex case \cite{Che43} and by Tarski \cite{Tar51} in the real case constituted a milestone for a further development of algebraic geometry in these fields. In particular, modern real algebraic and semialgebraic geometry essentially arose after Tarski's result, whose consequences are also very clear in modern algorithmic algebraic geometry. For a very complete treatment on these topics in the real algebraic case we refer to \cite{BCR98,BPR06}.

In the real analytic setting an example by Osgood \cite{Os19} shows the existence of analytic maps whose image is not semianalytic, which implies the nonexistence of a similar result as Tarski's elimination in the real analytic setting. More in detail, a class of sets stable by boolean operations and by projection needs extra structure to be defined and is called subanalytic. After its definition by Gabrielov, the class of subanalytic sets and functions has been deeply studied from the point of view of geometry, see \cite{Hir73,BM88}. Since the 1980s, model theorists have focused on ordered structures, in particular on those sharing many finiteness properties with the ordered field of the reals which are called o-minimal structures. After their definition by Pillay and Steinhorn \cite{PS86}, the interactions of these structures with real analytic geometry has been deeply exploited by van den Dries, Miller and others, see as main references \cite{vdDM96,vdD98}. In particular, semialgebraic and subanalytic sets are special cases of o-minimal structures. We recall that new applications have recently appeared in complex analytic geometry as well, more precisely in Hodge theory, see the brand new papers \cite{BKT20,BBT23,BKU24}.

\subsection*{A model theoretical approach to hypercomplex analysis} The aim of this paper is to introduce and develop basic model theoretical properties of the division rings of quaternions and octonions with applications to hypercomplex analysis and geometry. This quite recent subject studies the analog of complex analysis in the more general setting of real alternative $*$-algebras of finite dimension. After many attempts during the 20th century, for instance by  Fueter, in 2007 Gentili and Struppa \cite{GS07} defined a notion of regular function $f:\mathbb{H}\to\mathbb{H}$, they called slice regular function, that generalizes to quaternions the classical concept of complex holomorphic function including, for the first time, quaternionic polynomials with coefficients on one side. This new notion of quaternionic regularity has generated a great deal of interest and has led both to a deep development of the theory in the quaternionic case, an almost complete reference is \cite{GSS13}, and to further extensions over very general real alternative $*$-algebras of finite dimension \cite{GP11}, including octonions and Clifford algebras. Then, Ghiloni and Perotti  \cite{GP22} developed the theory of slice regular functions in several variables. We outline two remarkable facts: slice regular functions are real analytic with respect to real coordinates and polynomials with ordered variables and coefficients on one side are slice regular. Consequently, very recent topics to investigate are the algebraic and geometric properties of algebraic subsets of $\hh^n$ and $\oo^n$ defined by slice regular polynomial equations. In particular, first results in this direction concern the description of the zero locus of a slice regular polynomial, see \cite{Lam01,GP22}, and the approach coming from commutative algebra, such as the quaternionic versions of Hilbert's Nullstellensatz in \cite{AP24,GSV24,GSV25}. The latter subjects strongly motivate the study of elimination results as Chevalley's and Tarski's theorems in the quaternionic and octonionic settings.

Let $L:=\{+,-,\cdot,0,1\}$ denote the language of rings. In this paper we provide an axiomatization of the $L$-theories of quaternions and octonions, ACQ and ACO for short, respectively, in Definitions \ref{def:q-structure}\,\&\,\ref{def:o-structure}. In Theorem \ref{thm:model-complete} and Corollary \ref{cor:complete} we prove  that these theories are complete and model complete but they do not admit quantifier elimination, see Example \ref{ex:qe}. We point out that, to the best of the author’s knowledge, such a model theory approach on the subject is not appearing in the literature, even though some of the main algebraic results used in the first two sections of the present paper go back to \cite{Niv41,Ghi12}. The main point of our approach follows from the possibility of expressing real coordinates in a first-order way by bi-interpreting each model of ACQ and ACO with respect to its center $R$, which turns out to be a real closed field. Thus, tools from real algebraic and semialgebraic geometry are available in both ACQ and ACO. Then, we study fundamental properties of what we call ordered polynomials with coefficients in a fixed model of ACQ and ACO. We point out that ordered polynomials over $\hh$ and $\oo$ coincide with the already mentioned slice regular polynomials. Here we prove that some algebraic and geometric properties of the zero loci of ordered polynomials extend from $\hh$ and $\oo$ to all models of their theories, see the Fundamental theorem of Algebra - Theorem \ref{thm:FTA} and Proposition \ref{prop:real-dim} below.

\subsection*{New perspectives and open problems} This paper opens to new perspectives from the point of view of model theory, algebraic geometry and hypercomplex analysis.

\vspace{.5em}

\noindent\textsc{Quantifier elimination: How to get it?} Very interesting questions to investigate are which fragments of ACQ and ACO do admit quantifier elimination and which extensions $L'$ of the language of rings $L$ leads the $L'$-theories of ACQ and ACO to admit quantifier elimination. First attempts in understanding these problems are already contained in the present paper. We prove that the fragment of ordered $L$-formulas does not admit quantifier elimination neither, in fact every $L$-formula is elementarily equivalent modulo ACQ and ACO to a formula contained in the fragment under consideration, see Theorem \ref{thm:ord-form}. In addition, in Remark \ref{rem:conj} we observe that the trace and the conjugation are $L$-definable functions with parameters in ACQ and ACO. Hence, a good candidate as a language $L'$ extending $L$ such that the $L'$-theories ACQ and ACO admit quantifier elimination is $L':=L\cup\{\textnormal{tr},^-\}$. Indeed, in the recent paper \cite{KT25} Klep and Tressl prove that the theory of square matrices $\mathcal{M}_n(R)$ of order $n$ over a real closed field $R$ admits quantifier elimination in the language $L'$ with parameters, where the interpretations of $\textnormal{tr}$ and $^-$ are the usual trace and transposition of matrices. Observe that, after embedding $\hh_R$ in the space of $(4\times 4)$-matrices $\mathcal{M}_4(R)$ in the standard way, we see that the trace and conjugate functions $\textnormal{tr}_{\hh}$ and $^-_\hh$ actually coincide (up to a constant) with transposition and trace of a matrix. Hence, this result strongly encourages the study of quantifier elimination for ACQ and ACO in the language $L':=L\cup\{^-,\textnormal{tr}\}$ as well.

\vspace{.5em}

\noindent\textsc{The geometry of algebraic sets.} In the present paper we introduce the notion of algebraic set over each model of ACQ and ACO as a finite union of sets defined by finite systems of ordered polynomial equations, see Definition \ref{def:algset}. We show that these sets constitute the closed sets of a topology, which we call the Zariski topology since it is the coarsest one whose family of closed sets contains the zero loci of ordered polynomials. Then, we characterize algebraic sets over each model of ACQ and ACO in terms of real algebraic sets, see Theorem \ref{thm:iso-real} below. However, for a fixed $m\in\N$, $H\models \textnormal{ACQ}$ and $O\models \textnormal{ACO}$, the family of algebraic subsets of $H^m$ and $O^m$ is, in general, strictly contained in the family of algebraic subsets of $R^{4m}$ and $R^{8m}$, as the case $m=1$ shows. This encourages the study of proper quaternionic and octonionic techniques to describe the geometry of algebraic sets for a fixed $m\in\N$. We expect that the recent results by Gori, Sarfatti, and Vlacci \cite{GSV24,GSV25} on the algebraic properties of quaternionic ordered polynomial rings and their ideals, in particular the Nullstellensatz-type theorems, will be useful for the further development of the subject. 

\vspace{.5em}

\noindent\textsc{Tame hypercomplex analysis.} Definable slice regular functions are expected to be a class of tame functions, both over $\hh$ and $\oo$, whose properties will extend to similar classes of definable functions over any model of ACQ and ACO, as suggested by model completeness for these $L$-theories and our results on ordered polynomials. In addition, since slice regular functions are in particular real analytic functions with respect to real coordinates, the class of definable slice regular functions actually coincides with the class of slice regular functions which are Nash with respect to real coordinates, see \cite[Definition\,2.9.3]{BCR98} and \cite{Shi87}. Recently, Bisi and Carbone \cite{BC25} independently introduced and studied this latter class of functions in the case of quaternionic and octonionic algebras over $\R$, establishing several equivalent definitions and proving many interesting finiteness properties. However, as already mentioned, sometimes a larger class of definable functions is needed to better encode more general properties of real analytic functions. More precisely, the analyticity of slice regular functions and the definability of real coordinates suggest the interest of exploiting methods from o-minimality as well into the subject. For this purpose, the results of Peterzil and Starchenko \cite{PS01,PS03} on definable complex analysis in a non-standard setting using o-minimality are fundamental starting points to investigate the quaternionic and octonionic cases for slice regular functions.

\subsection*{Structure of the paper} Let $L:=\{+,-,\cdot,0,1\}$ denote the language of rings.

In Section \ref{sec:1} we define the $L$-theories of algebraically closed quaternion and octonion algebras, ACQ and ACO for short, see Definitions \ref{def:q-structure}\&\ref{def:o-structure}. Then, we combine results by Niven and Ghiloni to completely characterize all possible models of these theories: up to isomorphism they coincide with quaternion and octonion algebras over a real closed field, see Theorem \ref{thm:niv-ghi}. Then, we provide a relative version of mentioned characterization with respect to substructures.

In Section \ref{sec:2} we prove the main results from the point of view of model theory. We prove that ACQ and ACO are bi-interpretable with RCF, the theory of real closed fields, see Proposition \ref{prop:bi-int}. As a consequence, we completely characterize all definable sets with parameters in a given model of ACQ and ACO, respectively, as semialgebraic sets in terms of the above bi-interpretation. Then, we prove that the $L$-theories ACQ and ACO are model complete, complete but they do not admit quantifier elimination, see Theorem \ref{thm:model-complete}, Corollary \ref{cor:complete} and Example \ref{ex:qe}.

In Section \ref{sec:3} we extend to every model of ACQ and ACO some properties of ordered polynomials already known in the literature over $\hh$ and $\oo$, respectively, such as the Fundamental Theorem of Algebra - Theorem \ref{thm:FTA} and a sharp bound for the real dimension of their zero loci in Proposition \ref{prop:real-dim} for polynomials in several variables. We introduce the notion of algebraic set and of the Zariski topology. Finally, we deduce that the fragment of ordered $L$-formulas does not admit quantifier elimination neither and, as a consequence of the proof, we give a complete characterization of algebraic sets both over ACQ and ACO in terms of real algebraic geometry.

\vspace{1.5em}

\section{Axiomatizations of ACQ $\&$ ACO}\label{sec:1}

Let $L:=\{+,-,\cdot,0,1\}$ denote the language of rings. The aim of this section is to provide two classes of first-order $L$-structures one containing $\hh$, the algebra of quaternions, and the other containing $\oo$, the algebra of octonions. At the end of the day, these classes of structures will exactly coincide with the class of models of the $L$-theory of quaternions $\textnormal{Th}_L(\HH)$ and of the $L$-theory of octonions $\textnormal{Th}_L(\oo)$, respectively, see Section \ref{subsec:2.1}.

\begin{notation}
	In what follows the term ring refers to a structure satisfying the usual ring axioms a part from associativity. Thus, in what follows a ring is non-necessarily associative.
\end{notation}

Let $D$ be a ring with unity $1\neq 0$. We say that $D$ is \emph{alternative} if for every $a,b\in D$ the following holds: $a(ab)=(aa)b$ and $(ab)b=a(bb)$. Hence, if $D$ is alternative no parenthesis are needed in the expressions $a^nb$ and $ab^n$. We say that $D$ is a \emph{division ring} if for every $a,b\in D$, with $a\neq 0$, there are unique $c,d\in D$ such that $ac=b$ and $da=b$. In particular, if $D$ is a division ring, then every non-null element $a\in D$ admits unique two-sided inverses, that is, there are unique $c,d\in D$ such that $ac=1$ and $da=1$. Denote by
\begin{equation*}
	R:=\{a\in D\,|\,\forall b\forall c((ab)c=a(bc)=(ca)b)\}
\end{equation*}
the \emph{center} of $D$, that is, the subring of $D$ constituting of all elements both associating and commuting with any other element of $D$. Observe that, when $D$ is associative, then $R$ can be defined as follows:
\[
R=\{a\in D\,|\,\forall b(ab=ba)\}
\]

\begin{remark}\label{rem:R-field}
	If $D$ is a division ring, then its center $R$ is a field. Indeed, by definition $R$ is a commutative and associative ring. We are left to prove that for every $a\in R$, with $a\neq 0$, the two-sided inverses, namely the unique $c,d\in D$ such that $ac=1$ and $da=1$, actually coincide and are contained in $R$. By associativity and commutativity of $a\in R$ with respect to any element of $D$ we obtain that $d=d(ac)=(da)c=c$, thus the two-sided inverses $c,d\in D$ of $a$ actually coincide. In addition, for every $b\in D$ we have $cb=(cb)(ac)=((cb)a)c=(a(cb))c=((ac)b)c=bc$ and for every $e,f\in D$ we have
	$(ce)f=c((e(ac))f)=c(((ea)c)f)=c(((ae)c)f)=(ca)((ec)f)=((ec)f)=(ce)f$, as desired.
\end{remark}

If $D$ is a division ring, by Remark \ref{rem:R-field}, $D$ is in particular a vector space over its center $R$. We say that $D$ is \emph{centrally finite} if its dimension as a $R$-vector space is finite.

\begin{example}
	Let $F$ be a field of characteristic $\neq 2$. Denote by $\hh_F$ and $\oo_F$ the rings of quaternions and octonions over $F$, respectively. More precisely, we define $\hh_F$ as the $F$-vector space of dimension $4$ with basis $\{1,i,j,k\}$ satisfying the multiplication rules:
	\begin{equation}\label{eq:q}
	i^2=j^2=k^2=-1\quad\text{and}\quad ij=k.
	\end{equation}
	We define $\oo_F$ as the $F$-vector space of dimension $8$ with basis $\{1,e_1,\dots,e_7\}$ satisfying the multiplication roles:
	\begin{equation}\label{eq:o}
	e_i^2=-1\quad\text{for}\,i=1,\dots,7\quad \text{and} \quad e_ie_j=\epsilon_{ijk} e_k\quad\text{for}\,i\neq j,
	\end{equation}
	where $\epsilon_{ijk}$ denotes a completely antisymmetric tensor with value $1$ when $ijk = 123, 145, 176, 246,$ $257, 347, 365$. That is, $\oo_F\simeq \hh_F\oplus e_4 \hh_F$, where a basis of $\hh_F$ as a $F$-vector space satisfying (\ref{eq:q}) is chosen to be $\{1,e_1,e_2,e_3\}$. We refer to \cite{EHHKMNPR91,Lam01,S66} for additional information on these algebras. Observe that, if $F$ is an ordered field (in particular $F$ has characteristic 0), then $\hh_F$ and $\oo_F$ are centrally finite alternative division rings.
\end{example}

Let $D$ be an alternative division ring and $R$ its center. Observe that in general the set of polynomials with coefficients in $D$ is not uniquely determined. One may define $D[\mathtt{q}]$ to be the set of those polynomials having coefficients in $D$ on the left of the variable $\mathtt{q}$ (or on the right, respectively) but then the product which induces to $D[\mathtt{q}]$ a structure of ring is not the pointwise one, so the classical evaluation map $e_a:D[\mathtt{q}]\to D$ at $a\in D$ defined as $e_a\left(\sum_h^m a_h\mathtt{q}^h\right):=\sum_h^m a_h a^h$ is not a ring homomorphism if $a\in D\setminus R$. On the other hand, one may define $D[\mathtt{q}]$ as the set of $L$-terms with coefficients in $D$ in the variable $\mathtt{q}$. In this case the pointwise sum and product endow $D[\mathtt{q}]$ with a ring structure and these operations behave well with the evaluation map $e_a:D[\mathtt{q}]\to D$ at $a\in D$ which is actually a ring homomorphism but in that case polynomials are way much more complicated. However, since elements of $R$ actually commute with every element of $D$, such a choice is unique when we define the ring of polynomials $R[\mathtt{q}]$ in the variable $\mathtt{q}$ with coefficients in the center $R$ of $D$.

We say that $D$ is \emph{algebraically closed} if every nonconstant polynomial $p(\mathtt{q})\in R[\mathtt{q}]$ has a root in $D$. For instance, both $\hh$ and $\oo$ are algebraically closed alternative division rings, see \cite{Niv41} and its further topological generalization in \cite{EN44} for quaternions and \cite{Ghi12} for octonions. In addition, we refer to \cite{Lam01,Ghi12} for a complete description of several algebraically closure conditions, which are actually all equivalent for a centrally finite alternative division ring $D$.

\begin{remark}
	Recall that $R:=\{a\in D\,|\,\forall b\forall c((ab)c=a(bc)=(ca)b)\}$, so it is an $L$-definable set. This means that the algebraically closure condition for an alternative division ring $D$ defined above can be expressed by a countable set of $L$-sentences. Indeed, next $L$-sentence means that each polynomial $p(\mathtt{q}):=\sum_{h=1}^{d}a_h\mathtt{q}^h\in R[\mathtt{q}]$ of fixed degree $d\geq 1$ has a root in $D$:
	\[
	\forall a_0\dots \forall a_d \left(\Big(a_0,\dots,a_d\in R\wedge (a_d\neq 0)\Big)\rightarrow\Big(\exists b \Big(\sum_{h=1}^{d}a_h b^h=0\Big)\Big)\right).
	\]
\end{remark}

Now we are in position to define the classes of $L$-structures containing $\hh$ and $\oo$, respectively, we are interested in.

\begin{definition}[Algebraically closed quaternion algebra - ACQ]\label{def:q-structure}
	Let $H$ be an $L$-structure. Denote by $R:=\{q\in H\,|\,\forall a\forall b ((qa)b=q(ab)=(bq)a)\}$ the center of $H$. We say that $H$ is an \emph{algebraically closed quaternion algebra} if the following axioms are satisfied:
	\begin{itemize}
		\item[\emph{(H$1$)}]\label{h1} $H$ is an alternative division ring with unity such that $1\neq 0$.
		\item[\emph{(H$2$)}]\label{h2} $H$ is centrally finite of dimension $4$, that is, there are $i,j,k\in H$ so that $\{1,i,j,k\}$ is a basis of $H$ as a vector space over its center $R$. As a first-order $L$-formula it is described as:
		\begin{align*}
			\quad\exists i\exists j\exists k &\big( \neg(\exists\lambda_1\exists\lambda_i\exists\lambda_j\exists\lambda_k((\lambda_1,\lambda_i,\lambda_j,\lambda_k\in R)\wedge \neg(\lambda_1=\lambda_i=\lambda_j=\lambda_k=0)\\
			&\wedge (0=\lambda_1+\lambda_i\cdot i+\lambda_j\cdot j+\lambda_k k))\wedge\\
			&\forall q\exists\lambda_1\exists\lambda_i\exists\lambda_j\exists\lambda_k((\lambda_1,\lambda_i,\lambda_j,\lambda_k\in R)\wedge(q=\lambda_1+\lambda_i\cdot i+\lambda_j\cdot j+\lambda_k\cdot k))\big).
		\end{align*}
		\item[\emph{(H$3$)}]\label{h_3} $H$ is algebraically closed.
	\end{itemize}
	We denote by $\textnormal{ACQ}$ the \emph{$L$-theory of algebraically closed quaternion algebras}.
\end{definition}

\begin{definition}[Algebraically closed octonion algebra - ACO]\label{def:o-structure}
	Let $O$ be an $L$-structure. Denote by $R:=\{o\in O\,|\,\forall a\forall b ((oa)b=o(ab)=(bo)a)\}$ the center of $O$. We say that $O$ is an \emph{algebraically closed octonion algebra} if the following axioms are satisfied:
	\begin{itemize}
		\item[\emph{(O$1$)}] $O$ is an alternative division ring with unity such that $1\neq 0$.
		\item[\emph{(O$2$)}] $O$ is centrally finite of dimension $8$, that is, there are $e_1,e_2,e_3,e_4,e_5,e_6,$ $e_7\in O$ so that $\{1,e_1,e_2,e_3,e_4,e_5,e_6,e_7\}$ is a basis of $O$ as a vector space over its center $R$. As in the case of axiom $(\textnormal{H}2)$, previous property can be expressed as a first-order $L$-formula.
		\item[\emph{(O$3$)}] $O$ is algebraically closed.
	\end{itemize}
	We denote by $\textnormal{ACO}$ the \emph{$L$-theory of algebraically closed octonion algebras}.
\end{definition}

\begin{remark}
	Observe that, in general, to define a notion of vector space over a field a richer language is needed. Indeed, in general, one needs to define a functional symbol for each scalar multiplication with respect to an element of the ground field. However, in our case the language of rings $L:=\{+,-,\cdot,0,1\}$ is sufficient since the field $R$ such that $H$ and $O$ are $R$-vector spaces is an $L$-definable subset both in $H$ and $O$.
\end{remark}

Denote by $\textnormal{RCF}$ the first-order $L$-theory of real closed fields. Let us recall the precise $L$-axioms for that theory.

\begin{definition}[Real closed field - RCF]
	Let $R$ be an $L$-structure. We say that $R$ is a \emph{real closed field} if the following axioms are satisfied:
	\begin{itemize}
		\item[\emph{(R$1$)}] $R$ is a formally real field, that is, $R$ is a field and $-1$ cannot be expressed as a finite sum of squares.
		\item[\emph{(R$2$)}] Each polynomial of odd degree has a root in $R$.
		\item[\emph{(R$3$)}] $\forall x\exists y (x=y^2 \vee x=-y^2)$.
	\end{itemize}
\end{definition}

Let $S_1$ and $S_2$ be two $L$-structures. With abuse of notation, denote that $S_1$ is an $L$-\emph{substructure} of $S_2$ as $S_1\subseteq S_2$. Similarly, we denote $S_1$ is an \emph{elementary $L$-substructure} of $S_2$ as $S_1\preceq S_2$.

Next result is a well-known lemma concerning model completeness and extensions of languages: if an $L$-theory $T$ is model complete in an extension $L'$ of $L$, then $T$ is model complete. We give a proof in the case of RCF for sake of completeness.

\begin{lemma}\label{lem:RCF}
	The $L$-theory \textnormal{RCF} is model complete.
\end{lemma}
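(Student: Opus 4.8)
The plan is to reduce to Tarski's quantifier elimination for real closed fields in the richer language of ordered fields and then transfer model completeness back to $L$. Set $L' := L \cup \{\leq\}$ and recall that, by Tarski's theorem, $\textnormal{RCF}$ (expanded by the order) admits quantifier elimination in $L'$; in particular it is model complete as an $L'$-theory, since quantifier elimination forces every embedding between $L'$-models to be elementary. Every $L$-model $R\models\textnormal{RCF}$ expands uniquely to such an $L'$-model by interpreting $\leq$ via the formula below, so this is the only external input I would invoke.

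The crucial observation is that in any model $R \models \textnormal{RCF}$ the order is $L$-definable: axioms (R1)--(R3) guarantee that the nonnegative elements are exactly the squares, so that
\[
x \leq y \quad\Longleftrightarrow\quad \exists z\,(y - x = z^2).
\]
I would use this to show that every $L$-embedding $\iota \colon R_1 \hookrightarrow R_2$ between models of $\textnormal{RCF}$ is automatically an $L'$-embedding for the induced orders. Indeed, if $a \leq b$ in $R_1$ then $b - a = z^2$ for some $z \in R_1$, whence $\iota(b) - \iota(a) = \iota(z)^2$ and $\iota(a) \leq \iota(b)$ in $R_2$; conversely, if $a \not\leq b$ then, since $\leq$ is total, $b \leq a$ with $b \neq a$, so by preservation $\iota(b) \leq \iota(a)$ while injectivity of $\iota$ gives $\iota(b) \neq \iota(a)$, and antisymmetry yields $\iota(a) \not\leq \iota(b)$. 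Thus $\iota$ both preserves and reflects $\leq$, i.e. it is an $L'$-embedding.

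With these two ingredients the conclusion follows quickly. Given an $L$-substructure inclusion $R_1 \subseteq R_2$ of models of $\textnormal{RCF}$, the previous paragraph upgrades it to an $L'$-embedding of the corresponding ordered models, which by model completeness in $L'$ is $L'$-elementary. Since every $L$-formula is in particular an $L'$-formula, the inclusion is a fortiori $L$-elementary, that is $R_1 \preceq R_2$. As the inclusion was arbitrary, $\textnormal{RCF}$ is model complete as an $L$-theory.

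I expect the only genuine subtlety to be the verification that an $L$-embedding reflects the order rather than merely preserving it; this is exactly where the real closedness of $R_1$ (totality of $\leq$, hence trichotomy) together with the injectivity of the embedding is used, and it is the heart of the general principle quoted before the statement, namely that model completeness in an extension $L'$ of $L$ descends to $L$ once the additional symbols are $L$-definable.
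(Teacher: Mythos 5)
Your proof is correct and follows essentially the same route as the paper: both arguments expand $L$ by an order symbol, observe that the order on a real closed field is existentially $L$-definable via squares so that the inclusion of $L$-models is automatically an embedding of the ordered expansions, and then invoke Tarski's quantifier elimination for real closed ordered fields to get elementarity, which restricts back to $L$. Your write-up is in fact slightly more careful than the paper's at the one nontrivial point, namely that the embedding \emph{reflects} the order (the paper asserts $<_2|_{R_1}=<_1$ without detailing the use of trichotomy in $R_1$), so nothing is missing.
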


\begin{proof}
	Let $R_1,R_2\models \textnormal{RCF}$ such that $R_1\subseteq R_2$. Let us consider the extended language $L_<:=L\cup\{<\}$. Consider the $L_<$-structures $\langle R_1,<_1\rangle$ and $\langle R_2,<_2 \rangle$  in which $<_i$ is defined as follows: $x<_iy$ iff $\exists z(z\neq 0 \wedge x+z^2=y)$, for $i=1,2$. Observe that, since $R_1$ is an $L$-substructure of $R_2$, then $<_2|_{R_1}=<_1$. Denote by $\textnormal{RCF}_<$ the $L_<$-theory of real closed fields. Thus, $\langle R_i,<_i\rangle\models\textnormal{RCF}_<$ for $i=1,2$ and $\langle R_1,<_1\rangle\subseteq \langle R_2,<_2\rangle$. By quantifier elimination of $\textnormal{RCF}_<$, see  \cite{Tar51}, we have in particular $\langle R_1,<_1\rangle\preceq\langle R_2,<_2\rangle$. Hence: 
	\[
	R_1=\langle R_1,<_1\rangle|_L\preceq \langle R_2,<_2\rangle|_L=R_2.
	\]
\end{proof}

As a consequence of \cite[Theorem\,1]{Niv41} and \cite[Theorem 16.15]{Lam01}, originally proved by Baer, for the quaternion case and \cite[Theorem\,1.2]{Ghi12} for the octonion case, we have the following characterization of algebraically closed quaternion and octonion algebras:

\begin{theorem}[Niven-Baer, Ghiloni]\label{thm:niv-ghi}
	Let $H$ and $O$ be alternative division rings and denote by $R$ their centers. Then:
	\begin{enumerate}[label=\emph{(\roman*)}, ref=(\roman*)]
		\item $H\models \textnormal{ACQ}$ if and only if $R\models\textnormal{RCF}$ and $H$ is isomorphic to the quaternion algebra $\hh_R$ over $R$.
		\item $O\models \textnormal{ACO}$ if and only if $R\models\textnormal{RCF}$ and $O$ is isomorphic to the octonion algebra $\oo_R$ over $R$.
	\end{enumerate}
\end{theorem}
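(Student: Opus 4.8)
\emph{Plan.} The plan is to establish the two equivalences simultaneously, treating the quaternion case (i) and the octonion case (ii) by the same two-step strategy and invoking the cited structural results of Niven--Baer \cite{Niv41,Lam01} and Ghiloni \cite{Ghi12} precisely at the point where the classification of finite-dimensional alternative division rings is needed. Throughout, $R$ denotes the center, which is already a field by Remark \ref{rem:R-field}.

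For the ``if'' direction I would start from $R\models\mathrm{RCF}$ and verify the axioms directly on $\hh_R$ (resp. $\oo_R$). Axioms (H$1$)--(H$2$) (resp. (O$1$)--(O$2$)) are the standard structural facts: $\hh_R$ and $\oo_R$ are alternative, their centers are exactly $R$, and they have the prescribed dimension; they are division rings because the norm form is a sum of $4$ (resp. $8$) squares, hence anisotropic over a formally real field. For the algebraic closure axiom (H$3$)/(O$3$) the key observation is that, by the Artin--Schreier theorem, $R(\sqrt{-1})$ is algebraically closed and embeds into $\hh_R\subseteq\oo_R$; thus every nonconstant $p\in R[\mathtt q]$ already has a root in $R(\sqrt{-1})$, a fortiori in $\hh_R$ and in $\oo_R$.

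For the ``only if'' direction, suppose $H\models\mathrm{ACQ}$ (resp. $O\models\mathrm{ACO}$). First I would invoke the structure theory: a $4$-dimensional alternative division ring is associative and hence a quaternion algebra over its center, while an $8$-dimensional one is a Cayley--Dickson octonion algebra, which is exactly the content of the Niven--Baer and Ghiloni results. In either case every element satisfies its reduced characteristic equation $x^2-\tr(x)x+N(x)=0$ over $R$, so every element of $H$ (resp. $O$) is at most quadratic over $R$. Feeding in (H$3$)/(O$3$): any nonconstant $p\in R[\mathtt q]$ has a root $q_0$ whose minimal polynomial over $R$ has degree $\le 2$ and divides $p$, whence every irreducible polynomial over $R$ has degree $\le 2$ and every finite extension of $R$ has degree $\le 2$. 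Since $H$ (resp. $O$) is a division ring it cannot split, so $R$ cannot be algebraically closed (if $\sqrt{-1}\in R$ the norm form becomes isotropic and the algebra acquires zero divisors); hence $1<[\overline R:R]<\infty$, and the Artin--Schreier theorem yields $R\models\mathrm{RCF}$. Finally, once $R$ is real closed, the uniqueness of the quaternion and octonion division algebras over $R$ forces $H\cong\hh_R$ and $O\cong\oo_R$, completing the argument.

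The main obstacle is the passage from the abstract axioms to the classification, namely the assertion that a centrally finite alternative division ring of dimension $4$ or $8$ is a quaternion or octonion algebra and that such division algebras over a real closed field are unique up to isomorphism; this rests on the Bruck--Kleinfeld-type structure theory packaged in the cited theorems, and it is the single place where I would lean on \cite{Niv41,Lam01,Ghi12} rather than argue from scratch. A secondary technical point is checking that ``every irreducible polynomial over $R$ has degree $\le 2$'' genuinely forces $[\overline R:R]=2$ so that Artin--Schreier applies; this is routine field theory but must be arranged before the formal reality of $R$ is available.
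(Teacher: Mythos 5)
The paper offers no proof of this theorem beyond the citation itself: it is stated as a direct consequence of \cite[Theorem 1]{Niv41}, \cite[Theorem 16.15]{Lam01} (Baer) and \cite[Theorem 1.2]{Ghi12}, so your reconstruction is not competing with an argument in the text but rather unpacking what those references contain. Your sketch is consistent with that and is essentially correct: the ``if'' direction via anisotropy of the norm form and the algebraically closed subfield $R(\sqrt{-1})\subseteq\hh_R$, and the ``only if'' direction via reduced characteristic polynomials plus Artin--Schreier, is exactly the mechanism inside Baer's and Ghiloni's theorems. Two points deserve tightening. First, the inference from ``every irreducible polynomial over $R$ has degree $\le 2$'' to ``$[\overline R:R]=2$'' requires the primitive-element/compositum argument and silently assumes separability; in characteristic $2$ one can have $[\overline R:R]>2$ with every element of degree $\le 2$, so you should rule out positive characteristic first (e.g.\ by noting that for odd $p$ the polynomial $\mathtt{x}^p-t$ would be irreducible of degree $p>2$ unless $R$ is perfect, and handling $p=2$ via the fact that all quadratic extensions of $R$ embed into the degree-$2$ division algebra $H$, hence coincide) before invoking Artin--Schreier, which only afterwards tells you $\operatorname{char}R=0$. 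Second, your parenthetical that $\sqrt{-1}\in R$ makes ``the norm form'' isotropic presupposes $H\cong(-1,-1)_R$, which is the conclusion; for a general quaternion algebra $(a,b)_R$ the cleaner statement is that a finite-dimensional central division algebra over an algebraically closed field is trivial, contradicting $\dim_R H=4$. With those repairs the argument is sound and matches the route of the cited sources.
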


\begin{remark}
	Observe that as a consequence of Theorem \ref{thm:niv-ghi} and the extension of Frobenius Theorem over every real closed field, see \cite[Theorem 13.12, p. 208-9]{Lam01}, the axiom (H2) of Definition \ref{def:q-structure} can be replaced by the following:
	\begin{itemize}
		\item[(H$2$')]\label{h2'} $R\models \textnormal{RCF}$ and $H$ is noncommutative.
	\end{itemize}
\end{remark}

\begin{remark}
	By Theorem \ref{thm:niv-ghi}, we observe that actually in the axioms Definition \ref{def:q-structure}(H2) and Definition \ref{def:o-structure}(O2) we may safely require in addition that the elements $i,k,j\in H$ of the basis of $H$ over $R$ and $e_1,\dots,e_7$ of the basis of $O$ over $R$ satisfy the relations in (\ref{eq:q}) and (\ref{eq:o}), respectively. We could also have required $H$ to be associative in Definition \ref{def:q-structure}(H1) and $O$ to be non-associative in Definition \ref{def:o-structure}(O1) again by Theorem \ref{thm:niv-ghi}. Hence, in particular, if $H\models \textnormal{ACQ}$ then its center $R$ can be described by the simplified formula $\forall a (a\mathtt{q}=\mathtt{q}a)$.
\end{remark}

Let us specify the previous theorem in terms of extensions of $L$-structures.

\begin{lemma}\label{lem:real-sub}
	Let $H\models \textnormal{ACQ}$ and $O\models \textnormal{ACO}$. Denote by $R$ the center of $H$ and $O$. Then, for every $R_1\models \textnormal{RCF}$ such that $R_1\preceq R$, $\hh_{R_1}$ can be embedded as an $L$-substructure of $H$ and $\oo_{R_1}$ can be embedded as an $L$-substructure of $O$ in such a way that the embeddings $\varphi_{\hh_{R_1}}:\hh_{R_1}\to H$ and $\varphi_{\oo_{R_1}}:\oo_{R_1}\to O$ can be extended to isomorphisms $\varphi_{\hh_{R}}:\hh_{R}\to H$ and $\varphi_{\oo_{R}}:\oo_{R}\to O$.
\end{lemma}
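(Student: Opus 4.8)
The plan is to produce the global isomorphisms $\varphi_{\hh_R}\colon\hh_R\to H$ and $\varphi_{\oo_R}\colon\oo_R\to O$ first, to normalize them so that each restricts to the identity on the center $R$, and then to read off the required embeddings as their restrictions to the subalgebras $\hh_{R_1}\subseteq\hh_R$ and $\oo_{R_1}\subseteq\oo_R$ determined by the inclusion $R_1\subseteq R$. I treat the quaternionic case; the octonionic one is identical after replacing $\{1,i,j,k\}$ and the relations (\ref{eq:q}) by $\{1,e_1,\dots,e_7\}$ and (\ref{eq:o}). Since $H\models\textnormal{ACQ}$, Theorem \ref{thm:niv-ghi}(i) yields an $L$-isomorphism $\psi\colon\hh_R\to H$. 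The center is an $L$-definable subset, so $\psi$ carries the center of $\hh_R$ onto the center of $H$; identifying both with $R$, the restriction $\sigma:=\psi|_R$ is a field automorphism of $R$, which a priori need not be the identity.

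The key step is to absorb $\sigma$ into a twist of $\hh_R$. Let $\tilde\sigma\colon\hh_R\to\hh_R$ be the coordinate-wise extension of $\sigma$, given by $\tilde\sigma(\lambda_1+\lambda_i\,i+\lambda_j\,j+\lambda_k\,k):=\sigma(\lambda_1)+\sigma(\lambda_i)\,i+\sigma(\lambda_j)\,j+\sigma(\lambda_k)\,k$. All structure constants appearing in (\ref{eq:q}) lie in $\{-1,0,1\}$ and are thus fixed by $\sigma$, so a direct expansion of a product in the basis shows that $\tilde\sigma$ preserves the multiplication of $\hh_R$; hence $\tilde\sigma$ is an $L$-automorphism of $\hh_R$ with $\tilde\sigma|_R=\sigma$. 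Setting $\varphi_{\hh_R}:=\psi\circ\tilde\sigma^{-1}$ then gives an $L$-isomorphism $\hh_R\to H$ with $\varphi_{\hh_R}|_R=\sigma\circ\sigma^{-1}=\id_R$, so that $\varphi_{\hh_R}$ fixes $R$, and in particular $R_1$, pointwise.

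Finally, the inclusion $R_1\subseteq R$ induces an inclusion $\hh_{R_1}\subseteq\hh_R$ of $L$-structures, and I define $\varphi_{\hh_{R_1}}:=\varphi_{\hh_R}|_{\hh_{R_1}}$. As the restriction of an injective ring homomorphism it is an $L$-embedding of $\hh_{R_1}$ into $H$, its image is the $R_1$-subalgebra of $H$ spanned by $1,\varphi_{\hh_R}(i),\varphi_{\hh_R}(j),\varphi_{\hh_R}(k)$, and by construction $\varphi_{\hh_R}$ extends it. Running the same argument on the isomorphism $\oo_R\to O$ supplied by Theorem \ref{thm:niv-ghi}(ii) produces $\varphi_{\oo_{R_1}}$ and $\varphi_{\oo_R}$. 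The only point that genuinely goes beyond Theorem \ref{thm:niv-ghi} is this normalization on the center: a priori $\psi$ need not fix $R$, and since a real closed field may carry nontrivial automorphisms one cannot simply invoke rigidity; the coordinate-wise twist $\tilde\sigma$ is precisely the device that removes the discrepancy while respecting the inclusion $R_1\hookrightarrow R$. Note that only $R_1\subseteq R$ together with $R_1\models\textnormal{RCF}$ is used, the hypothesis $R_1\preceq R$ being stronger than what is needed here.
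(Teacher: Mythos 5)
Your proof is correct and follows essentially the same route as the paper: both obtain the global isomorphism from Theorem \ref{thm:niv-ghi} and then define the required embedding as its restriction to $\hh_{R_1}\subseteq\hh_R=\hh_{R_1}\otimes_{R_1}R$ (resp.\ $\oo_{R_1}\subseteq\oo_R$). The only difference is your normalization by the coordinate-wise twist $\tilde\sigma$ so that the isomorphism fixes the center pointwise; this step is valid (the structure constants are integers, so $\tilde\sigma$ is indeed a ring automorphism) but is not demanded by the statement, and the paper simply sets $\varphi_{\hh_{R_1}}:=\psi_H^{-1}|_{\hh_{R_1}}$ without it.
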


\begin{proof}
	Let $\hh_{R_1}\subseteq\hh_R=\hh_{R_1}\otimes_{R_1} R$ and $\oo_{R_1}\subseteq\oo_R=\oo_{R_1}\otimes_{R_1} R$. By Theorem \ref{thm:niv-ghi}, there are isomorphisms $\psi_H:H\to\hh_R$ and $\psi_O:O\to\oo_R$, so just consider the embeddings $\varphi_{\hh_{R_1}}:=\psi_H^{-1}|_{\hh_{R_1}}$ and $\varphi_{\oo_{R_1}}:=\psi_O^{-1}|_{\oo_{R_1}}$.
\end{proof}

\begin{remark}\label{rem:Q-bar}
	Denote by $\overline{\Q}^r:=\overline{\Q}\cap \R$ the real closure of $\Q$. As a consequence of Lemma \ref{lem:real-sub}, we have that $\hh_{\overline{\Q}^r}$ can be embedded as an $L$-substructure of $H$ and $\oo_{\overline{\Q}^r}$ can be embedded as an $L$-substructure of $O$ in such a way that the image of $\overline{\Q}^r$ is contained in the center $R$ of $H$ or $O$, respectively, and the embedding extends to an isomorphism as in Lemma \ref{lem:real-sub}, for every $H\models\textnormal{ACQ}$ and $O\models\textnormal{ACO}$.
\end{remark}

\begin{lemma}\label{lem:extension}
	Let $H_1,H_2\models \textnormal{ACQ}$ and $O_1,O_2\models \textnormal{ACO}$. The following hold:
	\begin{enumerate}[label=\emph{(\roman*)}, ref=(\roman*)]
		\item\label{lem:extension_1} Denote by $R_i$ the center of $H_i$ for $i=1,2$. If $H_1\subseteq H_2$, then $R_1\preceq R_2$ and each isomorphism $\varphi_1:H_1\to\hh_{R_1}$ extends to an isomorphism $\varphi_2:H_2\to\hh_{R_2}$. 
		\item\label{lem:extension_2}  Denote by $R_i$ the center of $O_i$ for $i=1,2$. If $O_1\subseteq O_2$, then $R_1\preceq R_2$ and each isomorphism $\varphi_1:O_1\to\oo_{R_1}$ extends to an isomorphism $\varphi_2:O_2\to\oo_{R_2}$. 
	\end{enumerate}
	In particular, in both \emph{\ref{lem:extension_1}\&\ref{lem:extension_2}}, for every $\phi\in\textnormal{Gal}(R_2:R_1)$ there exists a unique extension $\varphi_2$ of $\varphi_1$ such that $\varphi_2|_{R_2}=\phi$.
\end{lemma}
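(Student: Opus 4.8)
The plan is to prove Lemma \ref{lem:extension} by leveraging the structural characterization from Theorem \ref{thm:niv-ghi} together with model completeness of RCF (Lemma \ref{lem:RCF}). Since the two items are symmetric, I will carry out the argument for \ref{lem:extension_1} and note that \ref{lem:extension_2} follows identically with $\oo$ in place of $\hh$.

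\emph{Step 1: The centers embed and the embedding is elementary.} First I would show that $R_1\subseteq R_2$. Since the center is an $L$-definable subset (by the defining formula $\forall a\forall b((qa)b=q(ab)=(bq)a)$, or the simplified $\forall a(a\mathtt{q}=\mathtt{q}a)$), and $H_1\subseteq H_2$ as $L$-substructures, an element $q\in R_1$ commutes and associates with all elements of $H_1$; I must check it does so with all of $H_2$. This requires care: being central is a universally quantified condition, so centrality in the larger ring is \emph{not} automatic from centrality in the smaller one. Here is where Theorem \ref{thm:niv-ghi} enters: $R_i\models\textnormal{RCF}$ and $H_i\cong\hh_{R_i}$, so $R_i$ is exactly the field over which $H_i$ is a $4$-dimensional algebra with the standard quaternion relations. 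An element of $H_1$ lies in $R_1$ iff it is a scalar (i.e.\ lies in the span of $1$), and scalars of $\hh_{R_1}$ remain scalars — hence central — in $\hh_{R_2}$; this gives $R_1\subseteq R_2$ and in fact $R_1=R_2\cap H_1$. Once $R_1\subseteq R_2$ is established as an $L$-substructure relation between two models of RCF, model completeness of RCF (Lemma \ref{lem:RCF}) immediately upgrades this to $R_1\preceq R_2$.

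\emph{Step 2: Extending the isomorphism.} Given an isomorphism $\varphi_1:H_1\to\hh_{R_1}$, I would use the fact that $H_1\cong\hh_{R_1}$ means $H_1$ has a distinguished $R_1$-basis $\{1,i,j,k\}$ satisfying (\ref{eq:q}) that $\varphi_1$ sends to the standard basis. By Theorem \ref{thm:niv-ghi} applied to $H_2$, there is an isomorphism $\psi_2:H_2\to\hh_{R_2}$; the images $\psi_2(i),\psi_2(j),\psi_2(k)$ form an $R_2$-basis of $\hh_{R_2}$ satisfying the quaternion relations. The idea is to define $\varphi_2:H_2\to\hh_{R_2}$ by sending this distinguished basis $\{1,i,j,k\}\subseteq H_1\subseteq H_2$ to the standard basis of $\hh_{R_2}$ and extending $R_2$-linearly via $\varphi_2|_{R_2}=\mathrm{id}_{R_2}$ (or more generally a chosen field embedding). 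Concretely, every $q\in H_2$ writes uniquely as $q=\lambda_1+\lambda_i i+\lambda_j j+\lambda_k k$ with $\lambda_\bullet\in R_2$, and I set $\varphi_2(q):=\lambda_1+\lambda_i i+\lambda_j j+\lambda_k k$ in $\hh_{R_2}$. That this is a well-defined ring isomorphism restricting to $\varphi_1$ on $H_1$ follows because the multiplication table (\ref{eq:q}) of the basis is the \emph{same} in $H_1,H_2,\hh_{R_1},\hh_{R_2}$, so the two structures are free $4$-dimensional algebras on identical generators-and-relations over their respective centers.

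\emph{Step 3: Galois uniqueness.} For the final assertion, I would observe that any extension $\varphi_2$ of $\varphi_1$ is completely determined by its restriction to the center and its action on the basis. Since $\varphi_2$ must agree with $\varphi_1$ on $\{1,i,j,k\}$ (these lie in $H_1$), the map is pinned down on the basis, and then $R_2$-semilinearity forces $\varphi_2$ to be determined by $\varphi_2|_{R_2}$. As $\varphi_1$ is an isomorphism onto $\hh_{R_1}$, compatibility forces $\varphi_2|_{R_2}$ to restrict to $\varphi_1|_{R_1}$ on $R_1$; any field automorphism $\phi\in\textnormal{Gal}(R_2:R_1)$ then yields exactly one admissible choice $\varphi_2|_{R_2}=\phi$, giving the claimed unique extension. \textbf{The main obstacle} I anticipate is Step 1 — specifically, verifying that centrality is preserved under the enlargement $H_1\subseteq H_2$, since this is not a formal consequence of the substructure relation but genuinely relies on the rigidity provided by Theorem \ref{thm:niv-ghi} (that both rings are quaternion algebras over their centers, forcing the center to coincide with the field of scalars). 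Once the centers are correctly identified and shown to form an elementary pair, the rest is the routine bookkeeping of extending a basis-respecting isomorphism.
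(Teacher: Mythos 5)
Your Steps 2 and 3 follow the paper's construction, and your reduction of $R_1\preceq R_2$ to $R_1\subseteq R_2$ via Lemma \ref{lem:RCF} is correct. The problem is that Step 1 --- which you yourself single out as the main obstacle --- is resolved circularly rather than proved. You write that ``an element of $H_1$ lies in $R_1$ iff it is a scalar, and scalars of $\hh_{R_1}$ remain scalars --- hence central --- in $\hh_{R_2}$.'' But the ``field of scalars'' of a quaternion algebra \emph{is} its center, so ``scalars of $H_1$ remain scalars of $H_2$'' is literally the assertion $R_1\subseteq R_2$ that you are trying to prove. Theorem \ref{thm:niv-ghi} only supplies abstract isomorphisms $H_i\cong\hh_{R_i}$ separately for $i=1,2$; it gives no a priori compatibility between the embedding $H_1\hookrightarrow H_2$ and the $R_2$-vector-space structure of $H_2$. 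The same unproved compatibility reappears in Step 2, where you assert that every $q\in H_2$ is an $R_2$-linear combination of the basis $\{1,i,j,k\}$ of $H_1$: that this particular quadruple, living in the subring $H_1$, is an $R_2$-basis of $H_2$ is exactly what requires an argument.

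The missing ingredient is the Frobenius-type computation the paper carries out. Writing $i_H:=\varphi_1^{-1}(i)$, etc., one has $i_H\notin R_2$ because $i_H^2=-1$ and $-1$ is not a square in the formally real field $R_2$; hence $R_2[i_H]$ is an algebraically closed field; $j_H\notin R_2[i_H]$ because $j_H$ anticommutes with $i_H$ while $R_2[i_H]$ is commutative; therefore $R_2[i_H]\oplus j_H R_2[i_H]$ is a four-dimensional $R_2$-subspace of the four-dimensional $H_2$ and so equals $H_2$. Only now do both of your claims follow: $\{1,i_H,j_H,k_H\}$ is an $R_2$-basis of $H_2$, and any $a\in R_1$, commuting with $i_H,j_H,k_H$ (being central in $H_1$) and with $R_2$ (which is central in $H_2$), is central in $H_2$, i.e.\ $R_1\subseteq R_2$. (An alternative short fix for the inclusion alone: if $a\in R_1\setminus R_2$, its centralizer in $H_2\cong\hh_{R_2}$ is the commutative subfield $R_2[a]$, which cannot contain the noncommutative ring $H_1$.) With this inserted your outline matches the paper's proof; the octonionic case needs the additional analogous step that $e_4$ does not lie in the copy of $\hh_{R_2}$ generated by $e_1,e_2$, by non-associativity.
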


\begin{proof}
	Let us first prove \ref{lem:extension_1}. Denote by $1_{H}$ the identity element of $H_1$ and $H_2$ and by $i_H:=\varphi_1^{-1}(i), j_H:=\varphi_1^{-1}(j), k_H:=\varphi_1^{-1}(k)\in H_1\subset H_2$. Observe that $i_H,j_H,k_H\in H_1$ satisfy Definition \ref{def:q-structure}$(\textnormal{H}2)$ for $H_1$ and the relations (\ref{eq:q}). We will prove that $R_1\preceq R_2$ and $i_H,j_H,k_H\in H_1$ satisfy Definition \ref{def:q-structure}$(\textnormal{H}2)$ for $H_2$ as well.
	
	Since $H_1\subseteq H_2$, we have $\Q\subset R_1\cap R_2$. Consider the real closure $\overline{\Q}^r:=\overline{\Q}\cap\R$ of $\Q$. Recall that, by Lemma \ref{lem:RCF}, we have $\overline{\Q}^r\preceq R_1$ and $\overline{\Q}^r\preceq R_2$. By Lemma \ref{lem:real-sub} and Remark \ref{rem:Q-bar}, there exists $H\subset H_1$ such that $H$ is isomorphic to $\hh_{\overline{\Q}^r}$, $i_H,j_H,k_H\in H$ and they satisfy Definition \ref{def:q-structure}$(\textnormal{H}2)$ for $H$ and the relations (\ref{eq:q}). Let us consider $H\subseteq H_2$. Let $i_H\in H$. Observe that, since $i_H\notin {\overline{\Q}^r}$ but it is algebraic over $\Q$ and ${\overline{\Q}^r}\preceq R_2$, we have that $i_H\notin R_2$ as well. More precisely, $i_H:=\varphi_1^{-1}(i)$ is a square root of unity since $i$ is so and $\varphi_1|_{R_1[i]}$ is an isomorphism. Hence, ${\overline{\Q}^r}[i_H]$ and $R_2[i_H]$ are algebraically closed fields of characteristic $0$ such that ${\overline{\Q}^r}[i_H]\preceq R_2[i_H]$ as models of ACF, the $L$-theory of algebraically closed fields, which has quantifier elimination as well (see \cite{Che43}). In addition, $H={\overline{\Q}^r}[i_H]\oplus j_H {\overline{\Q}^r}[i_H]$. Observe that $j_H, k_H:=i_H j_H\notin R_2[i_H]$, otherwise $R_2[i_H]$ would not be commutative, thus we have $H_2=R_2[i_H]\oplus j_H R_2[i_H]$ as well by Frobenius' Theorem. This proves that $R_1\preceq R_2$ and the map $\varphi_2:H_2\to \hh_{R_2}$ defined by $R_2$-linear combinations of $\varphi_2(1_H)=1$, $\varphi_2(i_H)=i$, $\varphi_2(j_H)=j$ and $\varphi_2(k_H)=k$ is an isomorphism extending $\varphi_1$. More in detail, for every $\phi\in\textnormal{Gal}(R_2:R_1)$, there is a unique isomorphism $\varphi_2:H_2\to \hh_{R_2}$ such that $\varphi_2|_{H_1}=\varphi_1$ and $\varphi_2|_{R_2}=\phi$ constructed as above.
	
	The strategy to prove \ref{lem:extension_2} is the same as in \ref{lem:extension_1}. Denote by $1_{O}$ the identity element of $O_1$ and $O_2$ and by $e_{\ell,O}:=\varphi_1^{-1}(e_{\ell})\in O_1\subset O_2$, for every $\ell=1,\dots,7$, where $\{1,e_1,\dots,e_7\}$ denotes the canonical basis of $\oo_{R_1}$. Since $O_1\subseteq O_2$, we have $\Q\subset R_1\cap R_2$. Then, $\overline{\Q}^r\preceq R_2$ as well. Observe that $\{e_{1,O},\dots,e_{7,O}\}\in O_1$ satisfy Definition \ref{def:o-structure}$(\textnormal{O}2)$ for $O_1$ and the relations (\ref{eq:o}). Let $O\subset O_1$ so that $O$ is isomorphic to $\oo_{\overline{\Q}^r}$, $e_{\ell,O}\in O$ for every $\ell=1,\dots,7$ and $\{e_{1,O},\dots,e_{7,O}\}$ satisfy Definition \ref{def:o-structure}$(\textnormal{O}2)$ for $O$. Recall that $\oo_{\overline{\Q}^r}=\hh_{\overline{\Q}^r}\oplus e_4 \hh_{\overline{\Q}^r}$, in which $i=e_{1}$, $j=e_{2}$ and $k=e_{3}$. So following the same argument of \ref{lem:extension_1}, we get a real algebra $H_2=R_2[e_{1,O}]\oplus e_{2,O} R_2[e_{1,O}]\subset O_2$ isomorphic to $\hh_{R_2}$. Observe that $e_{4,O}\notin H_2$, otherwise $H_2$ would not be associative. Then, we have $O_2=H_2\oplus e_{4,H}H_2$. This proves that $R_1\preceq R_2$ and the map $\varphi_2:O_2\to \oo_{R_2}$ defined by $R_2$-linear combinations of $\varphi_2(1_O)=1$, $\varphi_2(e_{\ell,O})=e_\ell$ for $\ell=1,\dots,7$ is an isomorphism extending $\varphi_1$. More in detail, for every $\phi\in\textnormal{Gal}(R_2:R_1)$, there is a unique isomorphism $\varphi_2:O_2\to \hh_{R_2}$ such that $\varphi_2|_{O_1}=\varphi_1$ and $\varphi_2|_{R_2}=\phi$ constructed as above.
\end{proof}

As a consequence of Theorem \ref{thm:niv-ghi}, up to isomorphism, we may suppose every model of ACQ is of the form $\hh_R$ and every model of ACO is of the form $\oo_R$, for some real closed field $R$. In addition, by Lemma \ref{lem:extension} we may suppose for every $H_1,H_2\models \textnormal{ACQ}$ and $O_1,O_2\models \textnormal{ACO}$ such that $H_1\subseteq H_2$ and $O_1\subseteq O_2$ there exist $R_1,R_2\models \textnormal{RCF}$ such that $R_1\preceq R_2$, $H_1=\hh_{R_1}\subseteq \hh_{R_2}=H_2$ and $O_1=\oo_{R_1}\subseteq \oo_{R_2}=O_2$. So, from now on we will only deal with quaternion and octonion algebras over real closed fields as models of ACQ and ACO, respectively, even in extension arguments.

\vspace{1.5em}
\section{Basic properties of ACQ $\&$ ACO}\label{sec:2}

This section is devoted to establish model theoretical properties of ACQ and ACO descending from the real vector structure over its center. As already mentioned, we will consider models $H\models \textnormal{ACQ}$ and $O\models\textnormal{ACO}$ up to isomorphism, that is, we will suppose $H=\hh_R$ and $O=\oo_R$ for some $R\models\textnormal{RCF}$ and whenever $H_1, H_2\models \textnormal{ACQ}$ and $O_1,O_2\models \textnormal{ACO}$ such that $H_1\subseteq H_2$ and $O_1\subseteq O_2$, then $H_i=\hh_{R_i}$ and $O_i=\oo_{R_i}$ for some $R_i\models\textnormal{RCF}$ for $i=1,2$ and $R_1\preceq R_2$.

\vspace{0.5em}

\subsection{Bi-interpretation \& characterization of definable sets}\label{subsec:2.1}  We start by establishing the mutual interpretations of centrally finite division algebras with respect to their centers. 

\begin{proposition}[Mutual interpretation]\label{prop:m-int}
	Let $A$ be a centrally finite division algebra with center $R_1$, then $A$ and $R_1$ are mutually-interpretable in the language of rings $L$ with parameters. 
	
	In addition, the mutual interpretation is uniform in the sense that it preserves extensions of models: the interpretations $\iota:R_1\to A$ and $\jmath:A\to R_1$ extend to interpretations $\iota':R_2\to A_{R_2}$ and $\jmath':A_{R_2}\to {R_2}$ of $R_2$, where $A_{R_2}:=A\otimes_{R_1} R_2$ for every field extension $R_2|R_1$.
\end{proposition}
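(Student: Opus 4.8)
The plan is to exhibit both interpretations explicitly and then to check that the defining formulas and parameters survive base change verbatim; the only nontrivial content lies in the ``uniformity'' clause, and there the single thing to get right is that centrality is preserved under extension of scalars.

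\emph{Interpreting $R_1$ in $A$.} Here I would argue that this direction is essentially free: by definition $R_1$ is the center of $A$, which is cut out inside $A$ by the parameter-free $L$-formula $\psi(a):=\forall b\,\forall c\,((ab)c=a(bc)=(ca)b)$. The addition and multiplication induced on $\psi(A)$ are the restrictions of those of $A$, and $\langle\psi(A),+,\cdot,0,1\rangle$ is literally $R_1$. Thus $\jmath\colon A\to R_1$ is the interpretation whose domain is the definable set $\psi(A)\subseteq A$ and whose coordinate map is the identity, with no parameters needed.

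\emph{Interpreting $A$ in $R_1$.} Since $A$ is centrally finite, I would fix an $R_1$-basis $v_1,\dots,v_n$ of $A$ (say with $v_1=1$) and record the multiplication table $v_iv_j=\sum_{k=1}^n c_{ij}^k v_k$, taking the structure constants $c_{ij}^k\in R_1$ as parameters. Identifying $a=\sum_i a_iv_i$ with its coordinate vector $(a_1,\dots,a_n)\in R_1^n$, addition is componentwise and, because multiplication in \emph{any} $R_1$-algebra is $R_1$-bilinear (associativity is irrelevant here), the product is given coordinatewise by $(a\cdot b)_k=\sum_{i,j} c_{ij}^k\,a_i b_j$, a quadratic polynomial over $R_1$. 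These operations are $L$-definable over $R_1$ with the $c_{ij}^k$ as parameters, and the coordinate map $\iota\colon R_1^n\to A$, $(a_i)\mapsto\sum_i a_iv_i$, is an isomorphism onto $A$.

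\emph{Uniformity under base change.} Let $R_2|R_1$ be a field extension and $A_{R_2}:=A\otimes_{R_1}R_2$. The elements $v_i\otimes 1$ form an $R_2$-basis of $A_{R_2}$ with exactly the same structure constants $c_{ij}^k$ (now read in $R_2$ via $R_1\hookrightarrow R_2$), so the very same formulas and parameters give a coordinate map $\iota'\colon R_2^n\to A_{R_2}$ that restricts to $\iota$ on $R_1^n$. For $\jmath'$ I must check that the same $\psi$ defines $R_2$ inside $A_{R_2}$, i.e.\ that $Z(A_{R_2})=R_2\cdot 1$. For fixed $a$, the condition $\psi(a)$ reduces by bilinearity to the equalities $(av_l)v_m=a(v_lv_m)=(v_m a)v_l$ over all basis indices $l,m$, which translate into a finite homogeneous system of $R_1$-linear equations in the coordinates of $a$; let $M$ be its coefficient matrix over $R_1$. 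Then $Z(A)=\ker_{R_1}M=R_1\cdot 1$ is one-dimensional, so $\operatorname{rank}M=n-1$. Over $A_{R_2}$ the formula $\psi$ produces the same system with the same $M$, whence $Z(A_{R_2})=\ker_{R_2}M$; since matrix rank is invariant under field extension, $\dim_{R_2}\ker_{R_2}M=n-1$, and as $R_2\cdot 1\subseteq Z(A_{R_2})$ is already one-dimensional we conclude $Z(A_{R_2})=R_2\cdot 1\cong R_2$. Therefore $\psi$ defines $R_2$ in $A_{R_2}$ and $\jmath'$ extends $\jmath$. The main obstacle will be exactly this last computation: rather than invoking the structure theory of (alternative) central division algebras to say that base change preserves the center, I would reduce it to the elementary rank-invariance of $M$, which is what makes the interpretations extend by the same formulas.
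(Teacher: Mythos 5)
Your proof follows essentially the same route as the paper's: the definable center formula gives the interpretation of $R_1$ in $A$, and a basis together with its structure constants gives the ($L$-definable with parameters) ring structure on $R_1^n$ interpreting $A$, with the same formulas and parameters surviving base change to $R_2$. The one place you go beyond the paper is the explicit rank-invariance argument showing that $\psi$ still defines $R_2\cdot 1$ in $A_{R_2}$ — a point the paper leaves implicit — and that verification is correct.
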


\begin{proof}
	Observe that the field $R_1$ as an $L$-structure is isomorphic to the center of $A$ endowed with the restriction of the operations and the constant symbols of $L$ in $A$ to its center. This provides an interpretation $\iota:R_1\to A$ which extends to an interpretation $\iota_2:R_2\to A_{R_2}:=A\otimes_{R_1} R_2$, with $A\subseteq A_{R_2}$.
	
	Denote by $n$ the dimension of $A$ as a $R_1$-vector space and let $\{e_1=1,e_2,,\dots,e_{n}\}$ be a basis of $A$ over $R_1$. Let $a^{(i)}_{jk}\in R_1$ such that $e_j\cdot e_k=\sum_{i=1}^n a^{(i)}_{jk} e_i$ every $j,k\in\{1,\dots,n\}$ and $i=1,\dots,n$. Let us define the appropriate $L$-definable sum $+:R_1^n\times R_1^n\to R_1^n$, minus $-:R_1^n\to R_1^n$ and product  $*:R_1^n\times R_1^n\to R_1^n$ in $R_1^n$ with neutral elements $\underline{0}:=(0,0,\dots,0)$ and $\underline{1}:=(1,0,\dots,0)$ in such a way that $\langle R_1^n, +,-,*,\underline{0},\underline{1} \rangle$ is isomorphic to $\langle A,+,-,\cdot,0,1 \rangle$.  Define these operations as follows:
	\begin{align*}
		(a_1,a_2,\dots,a_n)+(b_1,b_2,\dots,b_n):=&(a_1+b_1,a_2+b_2,\dots,a_n+b_n),\\
		-(a_1,a_2,\dots,a_n):=&(-a_1,-a_2,\dots,-a_n),\\
		(a_1,a_2,\dots,a_n)*(b_1,b_2,\dots,b_n):=&\big((a_1,a_2,\dots,a_n)^T A_1(b_1,b_2,\dots,b_n),\\&\,\,\,(a_1,a_2,\dots,a_n)^T A_2(b_1,b_2,\dots,b_n),\\ &\quad\quad\quad\quad\quad\quad\quad\quad\vdots\\
		&\,\,\,(a_1,a_2,\dots,a_n)^T A_{n}(b_1,b_2,\dots,b_n)\big),
	\end{align*}
	where $A_i=(a^{(i)}_{jk})_{j,k=1,\dots,n}\in\mathcal{M}_{n}(R_1)$ for every $i\in\{1,\dots,n\}$. Then, define the isomorphism of $L$-structures $\jmath:A\to R_1^n$ as $\varphi(\sum_{i=1}^n a_i e_i):=(a_1,a_2,\dots,a_n)$. Observe that the previous operations are $L$-definable with parameters in $R_1^n$ and they trivially extend to operations over $R_2^n$. Hence the map $\jmath:A_{R_2}\to R_2^n$ as $\varphi(\sum_{i=1}^n a_i e_i):=(a_1,a_2,\dots,a_n)$ is still an isomorphism between $\langle A_{R_2}, +,-,\cdot,0,1 \rangle$ and $\langle R_2^n, +,-,*,\underline{0},\underline{1} \rangle$, as desired. 
\end{proof}

More precisely, in the case of algebraically closed quaternion and octonion algebras we obtain bi-interpretability as well.

\begin{proposition}[Bi-interpretation]\label{prop:bi-int}
	Let $R\models\textnormal{RCF}$. Then, $\hh_R$ and $R$ and $\oo_R$ and $R$ are bi-interpretable in the language of rings with parameters.
	
	In addition, the bi-interpretation is uniform in the sense that it preserves extension of models: let $R_1,R_2\models \textnormal{RCF}$ such that $R_1\preceq R_2$, then the interpretations $\iota:R_1\to\hh_{R_1}$ and $\jmath:\hh_{R_1}\to R$ extend to interpretations $\iota_2:R_2\to\hh_{R_2}$ and $\jmath:\hh_{R_2}\to {R_2}$.
\end{proposition}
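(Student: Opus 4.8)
The plan is to take the mutual interpretability already granted by Proposition \ref{prop:m-int} and upgrade it to a \emph{bi}-interpretation by exhibiting the two definable isomorphisms relating each structure to the result of interpreting it back inside itself. By Theorem \ref{thm:niv-ghi}, a model $\hh_R$ of $\textnormal{ACQ}$ (resp.\ $\oo_R$ of $\textnormal{ACO}$) is a centrally finite division algebra of dimension $4$ (resp.\ $8$) over its center $R\models\textnormal{RCF}$, so Proposition \ref{prop:m-int} applies directly and provides the interpretation $\iota\colon R\to\hh_R$ realizing $R$ as the ($L$-definable) center of $\hh_R$, together with the interpretation $\jmath\colon\hh_R\to R$ realizing $\hh_R$ on the domain $R^{4}$ equipped with the structure constants of the basis $\{1,i,j,k\}$. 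What remains is to verify the two compatibility conditions of bi-interpretability, namely that the round-trip interpretations of $R$ in $R$ and of $\hh_R$ in $\hh_R$ are definably isomorphic to the respective identities.

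The condition on the $R$-side is immediate. Interpreting $\hh_R$ inside $R$ via $\jmath$ produces the quaternion structure on $R^{4}$, and interpreting its center via $\iota$ singles out the set $R\times\{0\}^{3}$; the assignment $a\mapsto(a,0,0,0)$ is then a quantifier-free $R$-definable isomorphism between $R$ and this round-trip copy, as required. The same computation with $R^{8}$ settles the octonion case.

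The condition on the $\hh_R$-side is the heart of the matter, and I expect it to be the main obstacle. Interpreting $R$ inside $\hh_R$ via $\iota$ yields the center, and interpreting $\hh_R$ inside the center via $\jmath$ yields a copy of $\hh_R$ with domain contained in $\hh_R^{4}$; the witnessing isomorphism must be the coordinate map $q\mapsto(a_1,a_2,a_3,a_4)$ recovering the components of $q=a_1+a_2 i+a_3 j+a_4 k$, and the whole point is that this map has to be $L$-definable in $\hh_R$ with parameters $i,j,k$. This is exactly what Proposition \ref{prop:m-int} does \emph{not} give for free, since it only supplies the inverse assignment $(a_1,\dots,a_4)\mapsto\sum_\ell a_\ell e_\ell$. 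The key observation is that $\hh_R$ (and likewise $\oo_R$) is a quadratic algebra over its center, so conjugation is $L$-definable with parameters: explicitly $\overline{q}=-\tfrac12\bigl(q+iqi+jqj+kqk\bigr)$ for quaternions (and $\overline{o}=-\tfrac16\bigl(o+\sum_{\ell=1}^{7}e_\ell o\,e_\ell\bigr)$ for octonions), these being genuine $L$-formulas because $\tfrac12=(1+1)^{-1}$ is central. Setting $\operatorname{Re}(x)=\tfrac12(x+\overline{x})\in R$, the standard basis is orthonormal for the norm form, i.e.\ $\operatorname{Re}(e_m\overline{e_\ell})=\delta_{m\ell}$, so by $R$-linearity of $\operatorname{Re}$ one recovers each component as $a_\ell=\operatorname{Re}(q\,\overline{e_\ell})$, with $\ell=1,\dots,4$ and $e_1=1,e_2=i,e_3=j,e_4=k$. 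This produces the required $\hh_R$-definable coordinate isomorphism and completes bi-interpretability.

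Finally, uniformity is inherited from Proposition \ref{prop:m-int}: the center, the structure constants, the conjugation formula and the coordinate functions are all given by \emph{fixed} $L$-formulas whose parameters $i,j,k$ (resp.\ $e_1,\dots,e_7$) lie in the smaller model $\hh_{R_1}\subseteq\hh_{R_2}$. Hence, when $R_1\preceq R_2$, the very same formulas define the corresponding objects over $R_2$, so the bi-interpretation over $R_1$ extends verbatim to one over $R_2$. The octonion case runs identically throughout, working in dimension $8$ with the basis $\{1,e_1,\dots,e_7\}$.
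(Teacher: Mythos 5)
Your proposal is correct and follows essentially the same route as the paper: both reduce the problem to showing that the two round-trip compositions are definable, with the $R$-side handled by $a\mapsto(a,0,\dots,0)$ and the $\hh_R$-side (resp.\ $\oo_R$-side) by exhibiting the coordinate-extraction map as $L$-definable with parameters $i,j,k$ (resp.\ $e_1,\dots,e_7$). Your derivation via the definable conjugation $\overline{q}=-\tfrac12(q+iqi+jqj+kqk)$ and $a_\ell=\operatorname{Re}(q\,\overline{e_\ell})$ expands to exactly the paper's explicit graph equations $4\mathtt{q}_1=\mathtt{q}-i\mathtt{q}i-j\mathtt{q}j-k\mathtt{q}k$, etc., so the difference is purely presentational.
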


\begin{proof}
	Let us show that the mutual interpretations $\iota:R_1\to \hh_{R_1}$ and $\jmath:\hh_{R_1}\to R_1^4$ of Proposition \ref{prop:m-int} provide two $L$-definable functions $\jmath\circ\iota$ and $\iota^4\circ\jmath$, possibly with parameters. Observe that the map $\jmath\circ\iota:R_1\to\R_1^4$ is $\jmath\circ\iota(x)=(x,0,0,0)$, thus it is $L$-definable. On the other hand, the function $\iota^4\circ\jmath:\hh_{R_1}\to \hh_{R_1}^4$ is given by $(\iota^4\circ\jmath)(a_0+a_1 i+a_2 j+a_3 k):=(a_0,a_1,a_2,a_3)$, hence the graph $\Gamma(\iota^4\circ\jmath)\subset\hh_{R_1}\times\hh_{R_1}^4$ of $\iota^4\circ\jmath$ is defined by the equations
	\begin{align}
		\label{eq:bi-q_1}4 \mathtt{q}_{1}&=\mathtt{q}-i\mathtt{q} i-j\mathtt{q} j-k\mathtt{q} k,\quad 4i \mathtt{q}_2=\mathtt{q}-i\mathtt{q} i+j\mathtt{q} j+k\mathtt{q} k,\\
		\label{eq:bi-q_2}4j \mathtt{q}_3&=\mathtt{q}+i\mathtt{q} i-j\mathtt{q} j+k\mathtt{q} k,\quad 4k \mathtt{q}_4=\mathtt{q}+i\mathtt{q} i+j\mathtt{q} j-k\mathtt{q} k.
	\end{align}
	Hence, the function $\iota^4\circ\jmath:\hh_{R_1}\to \hh_{R_1}^4$ is $L$-definable with parameters.
	
	Similarly, the mutual interpretations $\iota:R_1\to \oo_{R_1}$ and $\jmath:\oo_{R_1}\to R_1^8$ of Proposition \ref{prop:m-int} provide two $L$-definable functions $\jmath\circ\iota$ and $\iota^8\circ\jmath$, eventually with parameters. Observe that the map $\jmath\circ\iota:R_1\to\R_1^8$ is $\jmath\circ\iota(x)=(x,0,\dots,0)$, thus it is $L$-definable. On the other hand, the function $\iota^8\circ\jmath:\oo_{R_1}\to \oo_{R_1}^4$ is given by $(\iota^8\circ\jmath)(\sum_{i=0}^{7}a_i e_i):=(a_0,\dots,a_7)$, hence the graph $\Gamma(\iota^8\circ\jmath)\subset\oo_{R_1}\times\hh_{R_1}^8$ of $\iota^8\circ\jmath$ is defined by the equations
	\begin{align}
		\label{eq:bi-o}8 \mathtt{o}_{1}&=\mathtt{o}-\sum_{h=1}^7 e_h\mathtt{o} e_h,\quad 8 e_\ell \mathtt{o}_{\ell}=\mathtt{o}-e_\ell\mathtt{o} e_\ell+\sum_{h\neq \ell} e_h\mathtt{o} e_h \quad\text{for every }\ell\in\{1,\dots,7\}.
	\end{align}
	Hence, the function $\iota^8\circ\jmath:\oo_{R_1}\to \oo_{R_1}^8$ is $L$-definable with parameters.
\end{proof}

\begin{remark}\label{rmk:L'}
	Observe that the elements $i,j,k\in\hh_R$ and $\{e_h\}_{h=1,\dots,7}\in\oo_R$ are not $L$-definable. Let us focus on the quaternionic case, a similar argument works for octonions as well. Suppose $i\in\hh_R$ is an $L$-definable element, then restricting the $L$-formula defining $i$ to the complex plane $C=R\oplus Ri$ gives, as a consequence, that $i$ is $L$-definable in $C$ as well. However, this is impossible by unique factorization of polynomials over $\Q$ since $\mathtt{x}^2+1\in\Q[\mathtt{x}]$ is the minimal polynomial of $i$ in $\Q[\mathtt{x}]$. This proves that it was necessary to add at least the constants $i,j,k\in\hh_R$ and $\{e_h\}_{h=1,\dots,7}\in\oo_R$ to the language $L$ to obtain equations (\ref{eq:bi-q_1})-(\ref{eq:bi-o}). We point out that formulas (\ref{eq:bi-q_1})\&(\ref{eq:bi-q_2}) were already known in literature, see \cite{GS12} as a reference, whereas formulas (\ref{eq:bi-o}) seem to appear for the first time, even though they derive from a direct computation similar to the quaternionic case.
\end{remark}

As a consequence of the bi-interpretation of the theories ACQ, ACO and RCF we get the following description of definable sets with parameters.

\begin{corollary}\label{cor:bi-int}
	Let $A$ be a division ring of dimension $d$ over its center $R$ and denote by $\jmath:A\to R^d$ as in Proposition \ref{prop:m-int}. Then, for every $n\in \N$, $\jmath^n(X)\subset\R^{nd}$ is $L$-definable whenever $X\subset A^n$ is so and $\jmath^n(X)\subset\R^{nd}$ is quantifier-free $L$-definable whenever $X\subset A^n$ is so.
	
	In addition, if $A\models\textnormal{ACQ}$ or $A\models\textnormal{ACO}$, then $A$ is isomorphic to $\hh_{R}$ or $\oo_R$, respectively, for some $R\models \textnormal{RCF}$. Then:
	\begin{enumerate}[label=\emph{(\roman*)}, ref=(\roman*)]
		\item\label{cor:bi-q} A set $X\subset \hh_{R}^n$ is $L$-definable with parameters if and only if $\jmath^n(X)\subset R^{4n}$ is semialgebraic.
		\item\label{cor:bi-o} A set $X\subset \oo_{R}^n$ is $L$-definable with parameters if and only if $\jmath^n(X)\subset R^{8n}$ is semialgebraic.
	\end{enumerate}
\end{corollary}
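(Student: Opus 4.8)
The plan is to derive the corollary entirely from the interpretations built in Propositions \ref{prop:m-int} and \ref{prop:bi-int}. The structural fact I would exploit is that $\jmath\colon A\to R^d$ is an isomorphism of $L$-structures onto $\langle R^d,+,-,*,\underline{0},\underline{1}\rangle$, whose operations $+,-,*$ are $L$-definable over $R$ with parameters, coordinatewise; moreover $+$ and $-$ are given coordinatewise by $L$-terms and $*$ by the bilinear forms $(a^{(i)}_{jk})$, hence again by $L$-terms (polynomials) in the coordinates. In the cases $A\models\textnormal{ACQ}$ and $A\models\textnormal{ACO}$ the coordinate map $\jmath^n$ is furthermore $L$-definable inside $\hh_R$, resp.\ $\oo_R$, with values in the center, as witnessed explicitly by the equations (\ref{eq:bi-q_1})-(\ref{eq:bi-o}).

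For the first assertion, let $X\subset A^n$ be defined by an $L$-formula $\phi(\bar x)$, possibly with parameters. Transporting $\phi$ along $\jmath$, every $L$-term over $A$ becomes a $d$-tuple of $L$-terms over $R$, precisely because the structure maps of $\langle R^d,+,-,*\rangle$ are polynomial in the coordinates. Consequently an atomic formula $s=t$ over $A$ turns into a finite conjunction $\bigwedge_{i=1}^d s_i=t_i$ of atomic $L$-formulas over $R$, so Boolean combinations are preserved and a quantifier-free $\phi$ yields a quantifier-free translation; an existential quantifier $\exists y$ over $A$ unfolds into $d$ existential quantifiers over the coordinates of $y$ in $R$. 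The resulting $L$-formula over $R$ defines $\jmath^n(X)$ and is quantifier-free whenever $\phi$ is, which establishes both clauses.

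For \ref{cor:bi-q} and \ref{cor:bi-o}, suppose $A\models\textnormal{ACQ}$, so $A\cong\hh_R$ with $R\models\textnormal{RCF}$ by Theorem \ref{thm:niv-ghi}; the octonionic case is identical with $d=8$. If $X$ is $L$-definable with parameters, then $\jmath^n(X)\subset R^{4n}$ is $L$-definable over $R$ by the first part; since over a real closed field the order is $L$-definable via $x<y\leftrightarrow\exists z\,(z\neq 0\wedge x+z^2=y)$ as in Lemma \ref{lem:RCF}, the $L$-definable subsets of $R^{4n}$ coincide with the $L_<$-definable ones, hence with the semialgebraic sets by Tarski's quantifier elimination \cite{Tar51}. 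Conversely, if $\jmath^n(X)$ is semialgebraic it is defined over $R$ by an $L$-formula $\psi$; relativizing every quantifier of $\psi$ to the center $R$, which is $L$-definable inside $\hh_R$, and substituting the $L$-definable coordinate map $\jmath^n$ from (\ref{eq:bi-q_1})-(\ref{eq:bi-q_2}), I obtain an $L$-formula with parameters expressing $\jmath^n(\bar a)\in\jmath^n(X)$. As $\jmath^n$ is a bijection this formula defines $X$, so $X$ is $L$-definable.

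The whole argument is bookkeeping over the bi-interpretation, and the only delicate points are the two translation procedures. Preservation of quantifier-freeness in the passage from $A$ to $R$ rests squarely on the multiplication on $R^d$ being polynomial in the coordinates, while the converse direction requires carefully relativizing $\psi$ to the $L$-definable center and composing with the definable maps $\jmath^n$. Beyond keeping these substitutions honest I expect no genuine obstacle.
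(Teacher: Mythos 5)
Your proposal is correct and follows essentially the same route as the paper: the first part via the quantifier-free $L$-definability of the transported operations on $R^d$, and the equivalence in (i)--(ii) via Tarski together with the bi-interpretation of Proposition \ref{prop:bi-int} (your explicit relativization of quantifiers to the center and substitution of the coordinate maps from (\ref{eq:bi-q_1})--(\ref{eq:bi-o}) is exactly what the paper packages as pushing forward along $\iota$ and pulling back along the definable isomorphism $\iota^{4n}\circ\jmath^n$). No gaps; your version is if anything more explicit about why quantifier-freeness survives the translation.
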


\begin{proof}
	The first part of the statement follows directly from the fact that the operations of $\langle R^n,+,-,*,\underline{0},\underline{1} \rangle$ defined in Proposition \ref{prop:m-int} are quantifier-free $L$-definable. On the other hand, assume $A=\hh_R$ (up to isomorphism) and consider the bi-interpretations $\iota:R\to \hh_{R}$ and $\jmath:\hh_R\to R^4$ of Proposition \ref{prop:bi-int}. Let $X\subset\hh_R^n$ such that $\jmath^n(X)\subset R^{4n}$ is semialgebraic. Since $\iota$ is an interpretation, $\iota^{4n}:R^{4n}\to\hh_R^{n}$ satisfies $Y:=\iota^{4n}(\jmath^n(X))$ is definable. Hence, $X=(\iota^{4n}\circ\jmath^n)^{-1}(Y)$ is definable since $\iota^{4n}\circ\jmath^n:\hh_R^n\to\hh_R^{4n}$ is a definable isomorphism by Proposition \ref{prop:bi-int}. Assume $A=\oo_R$ (up to isomorphism) and consider the bi-interpretations $\iota:R\to \oo_{R}$ and $\jmath:\oo_R\to R^8$ of Proposition \ref{prop:bi-int}. Let $X\subset\oo_R^n$ such that $\jmath^n(X)\subset R^{8n}$ is semialgebraic. Denote by $Y:=\iota^{8}(\jmath^n(X))$. As before, $X=(\iota^{4n}\circ\jmath^n)^{-1}(Y)$ is definable since $\iota^{8n}\circ\jmath^n:\hh_R^n\to\hh_R^{4n}$ is a definable isomorphism by Proposition \ref{prop:bi-int} and $Y$ is definable since $\iota$ is an interpretation.
\end{proof}

\begin{remark}
	Observe that when interpreting $R\models\textnormal{RCF}$ into $\hh_R$ (or $\oo_R$) the property of a set of being quantifier-free $L$-definable in $R$ is in general not preserved in $\hh_R$ (or $\oo_R$). Indeed, the restriction of an $L$-formula on the center $R$ of $\hh_R$ (or $\oo_R$) involves the use of quantifiers in the language of rings $L:=\{+,-,\cdot,0,1\}$.
\end{remark}

\vspace{.5em}

\subsection{Model Completeness $\&$ Completeness}

Here we are in a position to prove the main properties of the $L$-theories ACQ and ACO contained in this paper. We stress that whenever $H_1,H_2\models\textnormal{ACQ}$ the center $R_i$ of $H_i$ satisfies $R_i\models\textnormal{RCF}$ for $i=1,2$ and, in Lemma \ref{lem:extension}, we have proved that $R_1\preceq R_2$ and there is an isomorphism $\varphi:H_2\to\hh_{R_2}$ such that $\varphi|_{H_1}:H_1\to\hh_{R_1}$ is an isomorphism. Similarly, whenever $O_1,O_2\models\textnormal{ACO}$ the center $R_i$ of $O_i$ satisfies $R_i\models\textnormal{RCF}$ for $i=1,2$, $R_1\preceq R_2$ and there is an isomorphism $\varphi:O_2\to\oo_{R_2}$ such that $\varphi|_{O_1}:O_1\to\oo_{R_1}$ is an isomorphism. Hence, we may safely restrict to quaternion and octonion algebras over real closed fields as models of ACQ and ACO, respectively, even in extension arguments of this type.

\begin{theorem}[Model completeness]\label{thm:model-complete}
	The $L$-theories \textnormal{ACQ} and \textnormal{ACO} are model complete.
\end{theorem}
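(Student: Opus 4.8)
The plan is to reduce model completeness of \textnormal{ACQ} and \textnormal{ACO} to that of \textnormal{RCF} (Lemma \ref{lem:RCF}) by transporting formulas through the uniform bi-interpretation of Proposition \ref{prop:bi-int}. I treat \textnormal{ACQ}; the octonion case is verbatim, replacing $\hh$ by $\oo$ and the dimension $4$ by $8$.

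First I would normalize the setup. Given $H_1\subseteq H_2$ with $H_1,H_2\models\textnormal{ACQ}$, Lemma \ref{lem:extension} shows their centers satisfy $R_1\preceq R_2$ and that the identifications with standard models are compatible, so we may assume $H_1=\hh_{R_1}\subseteq\hh_{R_2}=H_2$ with $R_1\preceq R_2$. It then suffices to show $H_1\preceq H_2$, that is, $H_1\models\phi(\bar a)\iff H_2\models\phi(\bar a)$ for every $L$-formula $\phi(\bar x)$ and every tuple $\bar a$ from $H_1$.

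Next I would push $\phi$ down to the center. Let $\jmath\colon\hh_R\to R^4$ be the coordinate interpretation of Proposition \ref{prop:bi-int}, under which $\hh_R$ is interpreted in $R$ as $(R^4,+,*)$. For an $L$-formula $\phi(\bar x)$ in $n$ quaternionic variables, interpretability yields an $L$-formula $\phi^{\jmath}(\bar y)$ in $4n$ variables over the center such that
\[
H_i\models\phi(\bar a)\iff R_i\models\phi^{\jmath}(\jmath^n(\bar a)),\qquad i=1,2.
\]
Since $\bar a\in\hh_{R_1}$, its coordinates $\jmath^n(\bar a)$ lie in $R_1^{4n}\subseteq R_2^{4n}$, and as $R_1\preceq R_2$ we obtain $R_1\models\phi^{\jmath}(\jmath^n(\bar a))\iff R_2\models\phi^{\jmath}(\jmath^n(\bar a))$. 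Chaining the three equivalences yields $H_1\preceq H_2$.

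The only point requiring care is that the translation $\phi^{\jmath}$ must be \emph{the same} formula for $R_1$ and $R_2$: this is exactly the uniformity asserted in Proposition \ref{prop:bi-int}. Concretely, the interpretation of $\hh_R$ inside its center is given by the fixed integer structure constants of the multiplication table, so the defining formulas for $+$ and $*$ on $R^4$ — and hence $\phi^{\jmath}$ — do not depend on the model. Granting this, the argument is purely formal; the real content lives in the model completeness of \textnormal{RCF} and in the uniform bi-interpretation, both already established.
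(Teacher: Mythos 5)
Your proposal is correct and follows essentially the same route as the paper: reduce to standard models $\hh_{R_1}\subseteq\hh_{R_2}$ via Lemma \ref{lem:extension}, translate each formula through the interpretation $\jmath$ of Proposition \ref{prop:bi-int} into a formula over the centers, and conclude from model completeness of \textnormal{RCF} (Lemma \ref{lem:RCF}). Your explicit remark that the translated formula must be the same for both models, justified by the integer structure constants of the multiplication table, is precisely the uniformity the paper's proof relies on implicitly.
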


\begin{proof}
	We only prove the case of ACQ, the proof is exactly the same for ACO. Let $R_1,R_2\models \textnormal{RCF}$ such that $R_1\preceq R_2$ and consider $\hh_{R_1}\subseteq \hh_{R_2}$. We will prove $\hh_{R_1}\preceq \hh_{R_2}$.
	
	Let $\phi(\mathtt{q}_1,\dots,\mathtt{q}_n)$ be an $L_{\hh_{R_1}}$-formula and $q_1,\dots,q_n\in\hh_{R_1}$. By Proposition \ref{prop:bi-int}, there exists an $L_{R_1}$-formula $\psi(\mathtt{x}_{1,0},\dots,\mathtt{x}_{n,3})$ such that:
	\begin{equation}\label{eq:model-compl}
	\HH_{R_i}\models \phi(q_1,\dots,q_n)\text{ if and only if }R_i\models \psi(\jmath(q_1),\dots,\jmath(q_n)),
	\end{equation}
	where $\jmath:\hh_{R_i}\to R_i^4$ denotes the interpretation defined as $\jmath(q_\ell):=(x_{\ell,0},x_{\ell,1},x_{\ell,2},x_{\ell,3})\in R_i^4$, where $x_{\ell,0},x_{\ell,1},x_{\ell,2},x_{\ell,3}$ denote the unique elements of $R_i$ such that $q_\ell:=x_{\ell,0}+i x_{\ell,1}+j x_{\ell,2}+ k x_{\ell,3}$, for every $\ell\in\{1,\dots,n\}$ and $i=1,2$. By model completeness of the $L$-theory RCF, namely by Lemma \ref{lem:RCF}, we have that $R_1\models \psi(x_{1,0},\dots,x_{n,3})$ if and only if $R_2\models \psi(x_{1,0},\dots,x_{n,3})$ for every $x_{1,0},\dots,x_{n,3}\in R_1$. Then, we conclude by applying first (\ref{eq:model-compl}) with $i=1$, model completeness of RCF and  (\ref{eq:model-compl}) finally with $i=2$.
\end{proof}

\begin{corollary}[Completeness]\label{cor:complete}
	The $L$-theories \textnormal{ACQ} and \textnormal{ACO} are complete. In particular, they coincide with $\textnormal{Th}_L(\HH)$ and $\textnormal{Th}_L(\oo)$, respectively.
\end{corollary}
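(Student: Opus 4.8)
The plan is to deduce completeness from model completeness (Theorem \ref{thm:model-complete}) together with the existence of a single model that embeds into every model of the theory — an \emph{algebraic prime model}. The standard fact I would invoke is that a model complete theory possessing a model embeddable in all of its models is complete: once every model contains an elementarily embedded common core, all models are elementarily equivalent to that core, hence to one another, and a theory all of whose models are pairwise elementarily equivalent is complete.

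For ACQ, the natural candidate for the common core is $\hh_{\overline{\Q}^r}$, where $\overline{\Q}^r=\overline{\Q}\cap\R$ is the real closure of $\Q$. First I would note that $\overline{\Q}^r\models\textnormal{RCF}$, so by Theorem \ref{thm:niv-ghi}\,(i) the algebra $\hh_{\overline{\Q}^r}$ is itself a model of ACQ. Next, Remark \ref{rem:Q-bar} provides, for every $H\models\textnormal{ACQ}$, an $L$-embedding of $\hh_{\overline{\Q}^r}$ into $H$ whose restriction carries $\overline{\Q}^r$ into the center $R$ of $H$; identifying $\hh_{\overline{\Q}^r}$ with its image gives $\hh_{\overline{\Q}^r}\subseteq H$ as $L$-structures. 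Model completeness (Theorem \ref{thm:model-complete}) then upgrades this inclusion to an elementary one, $\hh_{\overline{\Q}^r}\preceq H$. Consequently every model of ACQ is elementarily equivalent to $\hh_{\overline{\Q}^r}$, so any two models of ACQ are elementarily equivalent to each other, which is exactly completeness. The argument for ACO is verbatim the same with $\oo_{\overline{\Q}^r}$ in place of $\hh_{\overline{\Q}^r}$, invoking Theorem \ref{thm:niv-ghi}\,(ii).

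For the final assertion, since $\R\models\textnormal{RCF}$ we have $\HH=\hh_{\R}\models\textnormal{ACQ}$ and $\oo=\oo_{\R}\models\textnormal{ACO}$ by Theorem \ref{thm:niv-ghi}. A complete consistent theory coincides with the full theory of any of its models: for a complete $T$ and any $M\models T$, a sentence $\phi$ satisfies $T\vdash\phi$ if and only if $M\models\phi$, whence the deductive closure of $T$ equals $\textnormal{Th}_L(M)$. Applying this with $T=\textnormal{ACQ}$, $M=\HH$ and with $T=\textnormal{ACO}$, $M=\oo$ yields $\textnormal{ACQ}=\textnormal{Th}_L(\HH)$ and $\textnormal{ACO}=\textnormal{Th}_L(\oo)$.

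I do not expect a genuine obstacle here: all the substantive work has already been done in producing the uniform embedding of the prime model (Remark \ref{rem:Q-bar}) and in establishing model completeness. The only point requiring a little care is that the embedded copy of $\hh_{\overline{\Q}^r}$ (resp. $\oo_{\overline{\Q}^r}$) sits inside $H$ (resp. $O$) as a bona fide $L$-substructure with its center landing in the center of the ambient model, so that model completeness applies to genuine models of ACQ (resp. ACO) with $R_1\preceq R_2$; this compatibility is precisely what Remark \ref{rem:Q-bar} and Lemma \ref{lem:extension} guarantee.
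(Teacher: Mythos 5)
Your proposal is correct and follows essentially the same route as the paper: both arguments exhibit $\hh_{\overline{\Q}^r}$ (resp.\ $\oo_{\overline{\Q}^r}$) as a prime model embedding into every model of ACQ (resp.\ ACO) and then use model completeness (Theorem \ref{thm:model-complete}) to upgrade the embedding to an elementary one, concluding by the standard prime-model test. Your write-up is in fact slightly more explicit than the paper's, in that it spells out why the embedded copy is a genuine $L$-substructure (via Remark \ref{rem:Q-bar}) and why completeness yields the identification with $\textnormal{Th}_L(\HH)$ and $\textnormal{Th}_L(\oo)$.
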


\begin{proof}
	Let $R\models \textnormal{RCF}$. Recall that $\overline{\Q}^r$, namely the real closure of $\Q$, is the smallest real closed field, so $\overline{\Q}^r\preceq R$. Hence, $\hh_{\overline{\Q}^r}\preceq\hh_{R}=\hh_{\overline{\Q}^r}\otimes_{\overline{\Q}^r} R$ and $\oo_{\overline{\Q}^r}\preceq\oo_{R}=\oo_{\overline{\Q}^r}\otimes_{\overline{\Q}^r} R$ by model completeness of ACQ and ACO, namely Theorem \ref{thm:model-complete}. Thus, we deduce that both the $L$-theories ACQ and ACO are complete since they have $\hh_{\overline{\Q}^r}$ and $\oo_{\overline{\Q}^r}$ as their minimal models.
\end{proof}

According to \cite[Theorem 1]{Ros78}, ACQ does not have quantifier elimination in the language of rings. We present an explicit counterexample in ACQ, the same $L$-formula disproves quantifier elimination for ACO in the language of rings as well. 

\begin{example}\label{ex:qe}
		Consider the following $L$-formula $\varphi$:
		\begin{equation}\label{eq:qe}
			\varphi(\mathtt{q}):=(\forall a\forall b((\mathtt{q}a)b=\mathtt{q}(ab)=(b\mathtt{q})a))\wedge \exists c\big((\forall a\forall b((ca)b=c(ab)=(bc)a))\wedge(c^2=\mathtt{q})\big).
		\end{equation}
		Let $\hh_R\models \textnormal{ACQ}$, with $R\models \textnormal{RCF}$ its center. Denote by $R_{\geq 0}:=\{x\in R\,|\, x\geq 0\}$. Observe that $\iota(R_{\geq 0}):=\{q\in\hh_R\,|\,\varphi(q)\}$, that is, $\iota(R_{\geq 0})$ is an $L$-definable subset of $\hh_R$, where $\iota:R\to\hh_R$ denotes the embedding of Proposition \ref{prop:bi-int}. Observe that, by Corollary \ref{cor:bi-int}, the class of quantifier-free $L$-definable subsets of $\hh_R$ can be regarded as belonging to the class of quantifier-free $L$-definable subsets of $R^4$, after applying the interpretation $\jmath:\hh_R\to R^4$. We prove that $\jmath\circ\iota(R_{\geq 0})=R_{\geq 0}\times\{0\}\times\{0\}\times\{0\}\subset R^4$ is not definable by a quantifier-free $L$-formula, hence a fortiori $\iota(R_{\geq 0})$ is so. Assume on the contrary that there exists a quantifier-free $L$-formula $\psi(\mathtt{x}_1,\mathtt{x}_2,\mathtt{x}_3,\mathtt{x}_4)$ such that $\jmath(\iota(R_{\geq 0}))=\{(x_1,\dots,x_4)\in R^4\,|\,\psi(x_1,\dots,x_4)\}$. Then, $R_{\geq 0}=\{x\in R\,|\,\psi(x,0,0,0)\}$ would be quantifier-free $L$-definable, but this is false since a subset of $R$ is quantifier-free $L$-definable if and only if it is empty, finite or cofinite. This proves that $\varphi(\mathtt{q})$ as above is an $L$-formula which is not elementarily equivalent modulo ACQ to any quantifier-free $L$-formula $\psi(\mathtt{q})$, as desired.
\end{example}

\begin{remark}\label{rem:conj}
	Consider the extensions $L_{\hh_R}:=L\cup\{q_i\}_{i\in \hh_R}$ and $L_{\oo_R}:=L\cup\{o_i\}_{i\in \oo_R}$ of $L$ with constant symbols from $\hh_R$ and $\oo_R$, respectively. Consider the conjugation function $^-_\hh:\hh_R\to\hh_R$ and $^-_\oo:\oo_R\to\oo_R$ defined as $\overline{q}_\hh:=a_0-a_1 i-a_2 j-a_3 k$ and $\overline{o}_\hh:=a_0-\sum_{h=1}^7 a_h e_h$ and the trace functions $\textnormal{tr}_{\hh}:\hh_R\to R$ and $\textnormal{tr}_{\oo}:\oo_R\to R$ defined as $\textnormal{tr}_\hh(q):=\frac{q+\overline{q}_\hh}{2}$ and $\textnormal{tr}_\oo(o):=\frac{o+\overline{o}_\hh}{2}$. Then, by Corollary \ref{cor:bi-int}, the functions $^-_\hh$, $\textnormal{tr}_{\hh}$ and $^-_\oo$, $\textnormal{tr}_{\oo}$ are $L_{\hh_R}$-definable and $L_{\oo_R}$-definable, respectively. Consequently, as in \cite{KT25}, the study of quantifier elimination for ACQ and ACO in the expanded languages $L'_{\hh_R}:=L_{\hh_R}\cup\{^-,\textnormal{tr}\}$ and $L'_{\oo_R}:=L_{\oo_R}\cup\{^-,\textnormal{tr}\}$, respectively, is strongly encouraged for future investigations.
\end{remark}

\vspace{1.5em}

\section{Ordered polynomials: the fragments of ordered formulas}\label{sec:3}

\vspace{0.5em}

\subsection{Algebraic sets, the Zariski topology \& real dimension} In this section we will consider a fragment of the theories ACQ and ACO. Let us introduce the set of $L$-terms we are admitting in this fragment.

\begin{definition}\label{def:ord-L-poly}
	Let $p$ be an $L$-polynomial in the variables $\mathtt{q}_1,\dots,\mathtt{q}_n$. We say that $p$ is an \emph{ordered $L$-polynomial} if it is given by the finite sum of monomials of the form
	\[
	\mathtt{q}_1^{\alpha_1}\cdot\mathtt{q}_2^{\alpha_2}\cdot\, \dots\,\cdot \mathtt{q}_n^{\alpha_n},
	\]
	for some $\alpha:=(\alpha_1,\dots,\alpha_n)\in\N^n$.
	
	We refer to the \emph{fragment of ordered $L$-formulas} as the fragment of $\textnormal{ACQ}$ or $\textnormal{ACO}$, respectively, in which the admitted $L$-terms are exactly ordered $L$-polynomials.
\end{definition}

Observe that Definition \ref{def:ord-L-poly} extends naturally when we admit coefficients in some fixed structure of ACQ or ACO.

\begin{definition}\label{def:ord-poly-coeff}
	Let $R\models\textnormal{RCF}$. Let $p$ be an $L_{\hh_R}$-polynomial or  an $L_{\oo_R}$-polynomial in the variables $\mathtt{q}_1,\dots,\mathtt{q}_n$. We say that $p$ is an \emph{ordered polynomial with coefficients in $\hh_R$ or in $\oo_R$}, respectively, if it is given by the finite sum of monomials of the form
	\[
	\mathtt{q}_1^{\alpha_1}\cdot\mathtt{q}_2^{\alpha_2}\cdot\, \dots\,\cdot \mathtt{q}_n^{\alpha_n} a_\alpha,
	\]
	for some $\alpha:=(\alpha_1,\dots,\alpha_n)\in\N^n$ and $a_\alpha\in \HH_R$ or $a_\alpha\in \oo_R$, respectively.
	
	We refer to the \emph{fragment of ordered formulas with coefficients in $\hh_R$ or in $\oo_R$}, respectively, as the fragments of $L_{\hh_R}$-formulas or $L_{\oo_R}$-formulas in which the admitted $L_{\hh_R}$-terms or the admitted $L_{\oo_R}$-terms are exactly ordered polynomials with coefficients in $\hh_R$ or $\oo_R$, respectively.
\end{definition}

There are at least two motivations for studying the fragment of ordered $L$-formulas in ACQ and ACO. The first motivation is related to hypercomplex analysis and the notion of slice regular functions already mentioned in the Introduction. The second motivation to study the fragment of ordered $L$-formulas in ACQ and ACO concerns our Proposition \ref{prop:bi-int} and Corollary \ref{cor:bi-int}. Indeed, by bi-interpreting the theories ACQ and RCF and the theories ACO and RCF we have shown that for every $R\models\textnormal{RCF}$ the class of definable subsets with parameters in $\hh_R^n$ and $\oo_R^n$ coincides (via the bi-interpretation) with the class of semialgebraic subsets of $R^{4n}$ and $R^{8n}$, respectively. Observe that this procedure may naturally involve terms consisting of non-ordered polynomials. Thus, the classes of algebraic sets defined by ordered polynomials and of definable sets by means of ordered polynomials in $\hh_R^n$ or $\oo_R^n$ constitute subclasses of those sets studied in real algebraic geometry. This is why the development of proper quaternionic and octonionic techniques is of special interest.

\begin{definition}\label{def:algset}
	Let $X\subset\hh_R^n$ or $X\subset\oo_R^n$. We say that $X$ is a \emph{basic algebraic subset of $\hh_R^n$ or of $\oo_R^n$}, respectively, if $X$ can be described as the common solution set of a finite number of ordered polynomial equations with coefficients in $\hh_R^n$ or $\oo_R^n$, respectively. In other words, $X$ is a \emph{basic algebraic subset of $\hh_R^n$ or of $\oo_R^n$}, respectively, if
	\begin{align*}
		X&:=\{(q_1,\dots,q_n)\in\hh_{R}^n\,|\,\bigwedge_{s=1}^\ell p_s(q_1,\dots,q_n)=0\}\quad or\\ X&:=\{(o_1,\dots,o_n)\in\oo_{R}^n\,|\,\bigwedge_{s=1}^\ell p_s(o_1,\dots,o_n)=0\},
	\end{align*}
	for some ordered polynomials $p_s(\mathtt{q}_1,\dots,\mathtt{q}_n)$ with coefficients in $\hh_R$ or in $\oo_R$, for every $s\in\{1,\dots,\ell\}$, respectively.
	
	Let $X\subset\hh_R^n$ or $X\subset\oo_R^n$. We say that $X$ is an \emph{algebraic subset of $\hh_R^n$ or of $\oo_R^n$}, respectively, if $X$ is a finite union of basic algebraic subsets of $\hh_R^n$ or $\oo_{R}^n$, respectively.
\end{definition}

Observe that the family of algebraic subsets of $\hh_R^n$ (or $\oo_R^n$, respectively) constitute the closed sets of a topology of $\hh_R^n$ (or $\oo_R^n$, respectively). More precisely, this is the coarsest topology of $\hh_R^n$ (or $\oo_R^n$, respectively) whose family of closed sets contains the zero loci of ordered polynomials. We call this topology the \emph{Zariski topology of $\hh_R^n$} (or \emph{Zariski topology of $\oo_R^n$}, respectively).
		
Indeed, to show this is the case we only have to prove that if $\{X_i\}_{i\in I}\subset\hh_R^n$ (or $\{X_i\}_{i\in I}\subset\oo_R^n$, respectively) is a family of algebraic subsets, then $\bigcap_{i\in I}X_i$ is an algebraic subset of $\hh_R^n$ (or $\oo_R^n$, respectively). We just deal with the quaternionic case, a similar argument works for octonions as well. Consider the interpretation $\jmath:\hh_R\to R^4$ and the real algebraic set $\jmath^n(X_i)\subset R^{4n}$ for every $i\in I$. Then, the Noetherianity of the real Zariski topology of $R^{4n}$ ensures the existence of $N\in\N$ and $i_1,\dots,i_N\in I$ such that 
\[
	\bigcap_{i\in I}\jmath^n(X_i)=\bigcap_{s=1}^N\jmath^n(X_{i_s})\subset R^{4n}.
\]
Hence, $\bigcap_{i\in I}X_i=\bigcap_{s=1}^N X_{i_s}$. Write each $X_{i_s}$ as a finite union of basic algebraic subsets of $\hh_R^n$, that is, $X_{i_s}=\bigcup_{j=1}^{m_i} Y_j^{(i_s)}$ for some $m_i\in\N$ and basic algebraic subsets $Y_{j}^{(i_s)}\subset\hh_R^n$ for every $j\in J_i:=\{1,\dots,m_i\}$. Then, $\bigcap_{i\in I}X_i=\bigcap_{s=1}^{N} \bigcup_{j=1}^{m_i} Y_j^{(i_s)}=\bigcup_{(j_1,\dots,j_N)\in J} Y_{(j_1,\dots,j_N)}$, where $J:=J_1\times\dots\times J_N$ is a finite set and $Y_{(j_1,\dots,j_N)}:=\bigcap_{s=1}^{N} Y_{j_s}^{(i_s)}$ is a basic algebraic subset of $\hh_{R}^n$ for every $(j_1,\dots,j_N)\in J$. This shows that $\bigcap_{i\in I}X_i$ is an algebraic subset of $\hh_{R}^n$.

Since the Zariski topology of $\hh_R^n $ (or $\oo_R^n$, respectively) is (strictly) coarser than the topology induced by the usual real Zariski topology via the interpretation $\jmath:\hh_R\to R^4$ (or $\jmath:\oo_R\to R^8$, respectively), then the Zariski topology of $\hh_R^n $ (or $\oo_R^n$, respectively) is Noetherian as well. Hence, we dispose of the notions of \emph{irreducible algebraic set}, \emph{irreducible components} of an algebraic set and \emph{irreducible decomposition} of an algebraic set both over $\hh_{R}$ and $\oo_R$. Observe that, by definition, every irreducible algebraic set $X\subset\hh_R^n$ (or $\oo_R^n$, respectively) is a basic algebraic set.

\begin{remark}\label{rmk:basic-alg}
	There are algebraic subsets of $\hh_R^n$ and $\oo_R^n$ which cannot be expressed as basic algebraic ones. We just state our example over $\hh_R$ but it holds over $\oo_R$ as well. Let $X:=\{i,-i\}\subset\hh_R^n$. Clearly, $X$ is algebraic according to Definition \ref{def:algset}, but it is not a basic algebraic set. Indeed, each ordered polynomial $p(\mathtt{q})$ with coefficients in $\hh_R$ vanishing at $X$ must vanish at the whole basic algebraic set
	\begin{equation}\label{eq:basic}
		Y:=\{q\in\hh_R\,|\,q^2+1=0\}\supsetneq \{i,-i\}=:X.
	\end{equation}
	In particular, this shows that the ideals $\mathcal{I}(X)$ and $\mathcal{I}(Y)$ of ordered polynomials vanishing on $X$ and $Y$, respectively, as defined and studied in \cite{GSV25}, do coincide, whereas the two algebraic sets $X,Y\subset\hh_R$ have very different geometric properties ($X$ is reducible and $\jmath(X)\subset R^4$ has real dimension $0$ while $Y$ is irreducible and $\jmath(Y)\subset R^4$ has real dimension $2$).
	
\end{remark}

As a consequence of model completeness for ACQ and ACO, next definition is well defined, in the sense that it does not depend on the choice of the polynomials $p_1,\dots,p_\ell$ defining $X\subset\hh_R^n$ or $X\subset\oo_{R}^n$, respectively, but just on the set itself.

\begin{definition}\label{def:algset-coeff}
	Let $R_1, R_2\models\textnormal{RCF}$ such that $R_1\preceq R_2$. Denote by $q:=(q_1,\dots,q_n)$ and $o:=(o_1,\dots,o_n)$. Let $X=\{q\in\hh_{R_1}^n\,|\,\bigwedge_{s=1}^\ell p_s(q)=0\}$ or $X=\{o\in\oo_{R_1}^n\,|\,\bigwedge_{s=1}^\ell p_s(o)=0\}$ be an algebraic subset of $\hh_{R_1}^n$ or $\oo_{R_1}^n$, respectively. Denote by $X_{\hh_{R_2}}:=\{q\in\hh_{R_2}^n\,|\,\bigwedge_{s=1}^\ell p_s(q)=0\}\subset \hh_{R_2}^n$ the \emph{extension of $X$ to $\hh_{R_2}$} or $X_{\oo_{R_2}}=\{o\in\oo_{R_2}^n\,|\,\bigwedge_{s=1}^\ell p_s(o)=0\}\subset \oo_{R_2}^n$ the \emph{extension of $X$ to $\oo_{R_2}$}, respectively.
\end{definition}

Let $R\models\textnormal{RCF}$. Let $p(\mathtt{q}_1,\dots,\mathtt{q}_n)$ be an ordered polynomial with coefficients in $\hh_R$ and $p'(\mathtt{o}_1,\dots,\mathtt{o}_n,)$ be an ordered polynomial with coefficients in $\oo_R$. Denote by
\begin{align*}
	\mathcal{Z}_{\hh_R}(p)&:=\{(q_1,\dots,q_n)\in\hh_R^n\,|\,p(q_1,\dots,q_n)=0\}\\
	\mathcal{Z}_{\oo_R}(p')&:=\{(o_1,\dots,o_n)\in\oo_R^n\,|\,p'(o_1,\dots,o_n)=0\}
\end{align*}
the quaternion and octonion zero loci of $p$ and $p'$, respectively. 

\begin{remark}
	Observe that, by Corollary \ref{cor:bi-int}, both $\mathcal{Z}_{\hh_{R}}(p)\subset\hh_R^n$ and $\mathcal{Z}_{\oo_R}(p')\subset\oo_R^n$ can be interpreted in $R^{4n}$ and $R^{8n}$, respectively, as real algebraic sets.
\end{remark}

Let us generalize some properties of ordered polynomials with coefficients in $\hh_R$ and $\oo_R$ originally proved in \cite{Niv41,GP22} in the case $R=\R$. For the one-variable case we refer the interested reader to \cite[Ch. 5, \S 16]{Lam01} for a general treatment for non-commutative associative rings and \cite{GS08,GP11} for generalizations to slice regular functions.

Let us introduce a first-order characterization of the notion of dimension for real algebraic sets. Let $X\subset R^n$ be an algebraic set. Denote by $\mathcal{I}(X)\subset R[\mathtt{x}_1,\dots,\mathtt{x_n}]$ the ideal of polynomials vanishing on $X$. Recall that, by definition, $\dim_R(X)$ is the Krull dimension of the ring $R[\mathtt{x}_1,\dots,\mathtt{x_n}]/\mathcal{I}(X)$.

\begin{lemma}\label{lem:alg-dim}
	Fix $L_{R}:=L\cup\{c_i\}_{i\in R}$ to be the extension of $L$ with constant symbols taken from $R$. Let $X\subset R^n$ be an algebraic set, then $\dim_R(X)$ is $L_{R}$-definable.
\end{lemma}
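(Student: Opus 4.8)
The plan is to replace the Krull-theoretic definition of $\dim_R(X)$ by a geometric characterization that is manifestly first order over $R$, and then to write out the relevant $L_R$-formulas explicitly. The key point is that the order relation $<$ on $R$ is $L$-definable (as in the proof of Lemma \ref{lem:RCF}, via $x<y\iff\exists z(z\neq 0\wedge x+z^2=y)$), so that all semialgebraic notions --- in particular \emph{having nonempty interior} --- become $L$-expressible, while the finitely many coefficients needed to describe $X$ are available as constant symbols of $L_R$. Thus the aim is to detect the a priori algebraic quantity $\dim_R(X)$ by a first-order condition.

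First I would recall from real algebraic geometry that, since $X$ is a real algebraic set and $\mathcal{I}(X)$ is the (real radical) ideal of all polynomials vanishing on $X$, the Krull dimension of $R[\mathtt{x}_1,\dots,\mathtt{x}_n]/\mathcal{I}(X)$ agrees with the semialgebraic dimension of $X$, and that the latter admits the projection characterization
\[
\dim_R(X)=\max\Big\{\,d\in\{0,\dots,n\}\ :\ \exists\, I\subseteq\{1,\dots,n\}\text{ with }|I|=d,\ \operatorname{int}\big(\pi_I(X)\big)\neq\emptyset\,\Big\},
\]
where $\pi_I\colon R^n\to R^d$ denotes the projection onto the coordinates indexed by $I$; see \cite[\S 2.8]{BCR98}. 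Both facts hold uniformly for every $R\models\textnormal{RCF}$.

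It then remains to observe that each ingredient on the right-hand side is $L_R$-definable. Since $X$ is $L_R$-definable by a quantifier-free formula $\theta_X$, namely a finite conjunction of polynomial equations with named coefficients, each projection is $L_R$-definable by prefixing existential quantifiers over the dropped coordinates; write $\pi_I(X)=\{y\in R^d:\theta_I(y)\}$. The condition $\operatorname{int}(\pi_I(X))\neq\emptyset$ is then expressed by the $L_R$-sentence
\[
\exists y\,\exists\varepsilon\,\Big(\varepsilon>0\ \wedge\ \forall z\,\big(\textstyle\bigwedge_{t=1}^{d}|z_t-y_t|<\varepsilon\ \longrightarrow\ \theta_I(z)\big)\Big),
\]
where $y=(y_1,\dots,y_d)$, $z=(z_1,\dots,z_d)$, and $|a|<\varepsilon$ abbreviates the $L$-definable condition $-\varepsilon<a<\varepsilon$. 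Taking the finite disjunction over all $I$ with $|I|=d$ yields an $L_R$-sentence $\delta_{\geq d}$ equivalent over $R$ to ``$\dim_R(X)\geq d$'', and then $\delta_{=d}:=\delta_{\geq d}\wedge\neg\,\delta_{\geq d+1}$ captures ``$\dim_R(X)=d$''. This is precisely the sense in which $\dim_R(X)$ is $L_R$-definable.

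The main obstacle is not in writing the formulas, which is routine, but in the two geometric inputs feeding the middle step: that the Krull dimension of the coordinate ring coincides with the semialgebraic dimension (which relies on $\mathcal{I}(X)$ being real radical), and that this dimension equals the largest $d$ for which some coordinate projection of $X$ has nonempty interior. Both are standard over an arbitrary real closed field, but they are exactly what converts the algebraic invariant $\dim_R(X)$ into a property detectable by an $L_R$-formula; once they are in place, the order-definability of $<$ makes the first-order translation immediate.
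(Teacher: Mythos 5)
Your proof is correct, but it takes a genuinely different route from the paper's. The paper first reduces to an irreducible algebraic set via the decomposition $\dim_R(X)=\max_i\dim_R(X_i)$, fixes generators $p_1,\dots,p_\ell$ of the full vanishing ideal $\mathcal{I}(X)$, and then expresses the dimension through the rank of the Jacobian matrix $[\partial p_s/\partial \mathtt{x}_t]$ at points of $X$ (citing \cite[Proposition 3.3.2, Definition 3.3.4]{BCR98}), the rank condition being first-order via vanishing and non-vanishing of minors. You instead use the projection characterization of semialgebraic dimension --- $\dim_R(X)$ is the largest $d$ for which some coordinate projection $\pi_I(X)$, $|I|=d$, has nonempty interior --- together with the $L$-definability of the order, which makes ``nonempty interior'' first-order. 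Both arguments rest on a standard fact from \cite[\S 2.8 or \S 3.3]{BCR98} valid over any real closed field, so each is acceptable; but they buy different things. Your version works directly from \emph{any} quantifier-free description of $X$ (indeed from any defining $L_R$-formula), with no need to produce generators of $\mathcal{I}(X)$ or an irreducible decomposition, and it is insensitive to the behaviour of the Jacobian at singular points, where the rank can drop below $n-\dim_R(X)$ (e.g.\ at the vertex of a cone), a point the paper's displayed ``max over $d$'' formula has to handle via density of the regular locus. As a bonus, your formula only names the coefficients of the given defining equations, so the refinement of Remark \ref{rem:Q-coeff} (plain $L$-definability of $\dim_R(X)$ for $\Q$-algebraic sets) falls out immediately, whereas the paper needs the extra apparatus of $\Q$-irreducible components and $R|\Q$-regular points precisely because $\mathcal{I}(X)$ need not admit generators over $\Q$. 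The only thing to tighten is the citation: the identity between the Krull-dimension definition and the ``largest $d$ with some $d$-dimensional coordinate projection of nonempty interior'' should be pinned to the cell-decomposition results of \cite[\S 2.8]{BCR98} (or the o-minimal dimension theory), since it is the one nontrivial geometric input of your argument.
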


\begin{proof}
	Let $X_1,\dots,X_\ell$ be the irreducible components of $X$. By \cite[Theorem 2.8.3(i)]{BCR98}, we have $\dim_R(X)=\max(\dim_R(X_1),\dots,\dim_R(X_\ell))$. Observe that, for every $a,b\in R$ with $a\neq b$, $a<b$ if and only if $\exists c(b-a=c^2)$, thus it is possible to describe the maximum of a finite set without using the symbol $<$. Thus, we are only left to prove that $\dim_R(X)$ is $L$-definable for an irreducible algebraic set $X\subset R^n$.
	
	Let $X\subset R^n$ be an irreducible algebraic set and let $\mathcal{I}(X)=(p_1,\dots,p_\ell)$. Denote by $\textnormal{Reg}(X):=\{x\in X\,|\,\textnormal{rk}([\frac{\partial p_s}{\partial \mathtt{x}_t}(x)]_{s=1,\dots,\ell,\,t=1,\dots,n})=n-\dim_R(X)\}$ the set of nonsingular points of $X$ (see \cite[Definition 3.3.4]{BCR98}). Observe that $\textnormal{Reg}(X)\subset X$ is a non-empty Zariski open set, hence Zariski dense in $X$ by irreducibility of $X$. In addition, \cite[Proposition 3.3.2]{BCR98} shows that we have an equivalent definition of $\dim_R(X)$ in terms of nonsingular points, that is:
	\[
	\dim_R(X)=\max\Big(\Big\{d\in\{1,\dots,n\}\,\Big|\,\exists x\in X \,\Big(\textnormal{rk}\Big(\Big[\frac{\partial p_s}{\partial \mathtt{x}_t}(x)\Big]_{s=1,\dots,\ell,\,t=1,\dots,n}\Big)= n-d\Big)\Big\}\Big),
	\]
	where $\mathtt{x}:=(\mathtt{x}_1,\dots,\mathtt{x}_n)$.
	Observe that the latter equivalent definition of $\dim_R(X)$, for an irreducible algebraic set $X\subset R^n$, is $L_{R}$-definable since the rank of a matrix can be described in terms of vanishing minors, as desired.
\end{proof}

\begin{remark}\label{rem:Q-coeff}
	Assume $X\subset R^n$ is a $\Q$-algebraic set, that is, $X$ is an algebraic set that can be described by polynomial equations without coefficients. By considering the $\Q$-irreducible components of $X$ (see \cite[Definition 2.1.5]{FG}) we may suppose $X$ is $\Q$-irreducible as in Lemma \ref{lem:alg-dim}. Then, the thesis of Lemma \ref{lem:alg-dim} can be refined by stating that $\dim_R(X)$ is $L$-definable even though, in general, 
	\[
	\mathcal{I}(X)=\mathcal{I}_{\overline{\Q}^r}(X)R[\mathtt{x}]\supsetneq \mathcal{I}_{\Q}(X)R[\mathtt{x}],
	\]
	where $\mathcal{I}_{K}(X):=\mathcal{I}(X)\cap K[\mathtt{x}]$ denotes the ideal of polynomials with coefficients in $K$ vanishing over $X$, for $K=\Q,\overline{\Q}^r$ (see \cite[Theorem 3.1.2]{FG}). Indeed, let $\mathcal{I}_\Q(X)=(q_1,\dots,q_\ell)$, for some $q_1(\mathtt{x}),\dots,q_\ell(\mathtt{x})\in\Q[\mathtt{x}]$. As above, we may reduce to the case where $X\subset R^n$ is $\Q$-irreducible, that is, $\mathcal{I}_\Q(X)$ is a prime ideal of $\Q[\mathtt{x}]$. In \cite[Definition 4.1.1]{FG}, Fernando and Ghiloni introduce the notion of $R|\Q$-regular points and prove, in \cite[Proposition 5.2.6 \& Corollary 5.2.8]{FG}, that
	\[
	\textnormal{Reg}^*(X):=\Big\{x\in X\,\Big|\,\textnormal{rk}\Big(\Big[\frac{\partial q_s}{\partial \mathtt{x}_t}(x)\Big]_{s=1,\dots,\ell,\,t=1,\dots,n}\Big)=n-\dim_R(X)\Big\}\subset\textnormal{Reg}(X)
	\]
	is a non-empty $\Q$-Zariski open subset of $X$. Hence, if $X\subset R^n$ is a $\Q$-algebraic set, then
	\[
	\dim_R(X)=\max\Big(\Big\{d\in\{1,\dots,n\}\,\Big|\,\forall\mathtt{x}\,\Big(\textnormal{rk}\Big(\Big[\frac{\partial q_s}{\partial \mathtt{x}_t}(\mathtt{x})\Big]_{s=1,\dots,\ell,\,t=1,\dots,n}\Big)\leq n-d\Big)\Big\}\Big),
	\]
	where $\mathtt{x}:=(\mathtt{x}_1,\dots,\mathtt{x}_n)$.
	Since vanishing conditions on minors are equations over $\Q$, we deduce that $\dim_R(X)$ is $L$-definable, as desired. For more details about real $\Q$-algebraic geometry we refer to \cite{FG,GSa,Sav1}.
\end{remark}

\begin{definition}
	Let $R\models\textnormal{RCF}$. Let $X\subset\hh_R^n$ or $X\subset\oo_R^n$, we define the \emph{real dimension} $\dim_R(X)$ of $X$ as $\dim_R(\jmath^n(X))$ of $\jmath^n(X)\subset R^{4n}$ or $\jmath^n(X)\subset R^{8n}$, respectively.
\end{definition}

\begin{corollary}\label{cor:ext-dim}
	Let $R_1, R_2\models\textnormal{RCF}$ such that $R_1\preceq R_2$. Let $X\subset \hh_{R_1}^n$ or $X\subset\oo_{R_1}^n$ be an algebraic set. Then, $\dim_{R_1}(X)=\dim_{R_2}(X_{R_2})$.
\end{corollary}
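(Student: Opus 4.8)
The plan is to push the whole statement down to real algebraic geometry through the interpretation $\jmath$, and then to read off the equality of dimensions from the elementary inclusion $R_1\preceq R_2$ together with the $L_R$-definability of dimension proved in Lemma~\ref{lem:alg-dim}. First I would unwind the definition of real dimension: by definition $\dim_{R_1}(X)=\dim_{R_1}(\jmath^n(X))$ and $\dim_{R_2}(X_{R_2})=\dim_{R_2}(\jmath^n(X_{R_2}))$, where $\jmath^n(X)\subset R_1^{N}$ and $\jmath^n(X_{R_2})\subset R_2^{N}$ with $N=4n$ (resp.\ $N=8n$). Since the interpretation $\jmath$ is uniform (Proposition~\ref{prop:bi-int}), rewriting the ordered polynomial equations $p_1=\dots=p_\ell=0$ defining $X$ in real coordinates yields one fixed finite system of real polynomial equations, with coefficients in $R_1$, that cuts out $\jmath^n(X)$ over $R_1$ and cuts out $\jmath^n(X_{R_2})$ over $R_2$. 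Thus, writing $Y:=\jmath^n(X)$, we have $\jmath^n(X_{R_2})=Y_{R_2}$, and it suffices to prove $\dim_{R_1}(Y)=\dim_{R_2}(Y_{R_2})$ for a real algebraic set $Y\subset R_1^{N}$ and its extension $Y_{R_2}\subset R_2^{N}$ defined by the same equations.

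Next I would invoke the definability of dimension uniformly in the base field. Inspecting the proof of Lemma~\ref{lem:alg-dim}, for each $d\in\{0,\dots,N\}$ one obtains an $L$-formula $\theta_d(c)$, whose parameters $c$ are the coefficients of the defining equations, such that in \emph{every} model of RCF the formula $\theta_d(c)$ holds precisely when the algebraic set cut out by those equations has dimension $d$. The decisive point is that $\theta_d$ is literally the same formula for $R_1$ and $R_2$ and that its parameters $c$ lie in $R_1$. Applying the elementary inclusion $R_1\preceq R_2$ then gives, for each $d$,
\[
\dim_{R_1}(Y)=d\iff R_1\models\theta_d(c)\iff R_2\models\theta_d(c)\iff\dim_{R_2}(Y_{R_2})=d.
\]
As dimension takes only the finitely many values $0,\dots,N$, this forces $\dim_{R_1}(Y)=\dim_{R_2}(Y_{R_2})$, and hence $\dim_{R_1}(X)=\dim_{R_2}(X_{R_2})$.

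The main obstacle is exactly this uniformity, namely that a single formula $\theta_d$ with parameters in $R_1$ computes the dimension correctly over both fields. In the form produced by Lemma~\ref{lem:alg-dim} the formula $\theta_d$ is built from the Jacobian rank at nonsingular points relative to generators of the vanishing ideal, so one must verify that generators of $\mathcal{I}_{R_1}(Y)$ still generate $\mathcal{I}_{R_2}(Y_{R_2})$. This again follows from $R_1\preceq R_2$: membership in a real radical is witnessed by sums-of-squares identities, which is an $L$-condition on the coefficients and is therefore preserved along the elementary inclusion. A clean way to avoid this bookkeeping entirely is to use instead the semialgebraic description $\dim(Y)\geq d$ if and only if some projection of $Y$ onto a $d$-element set of coordinates has nonempty interior; this is expressed by an $L$-formula that is manifestly independent of the field, and the very same argument through $R_1\preceq R_2$ then applies verbatim.
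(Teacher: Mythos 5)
Your proof is correct and rests on the same two pillars as the paper's: the first-order definability of real dimension from Lemma \ref{lem:alg-dim} and a transfer along an elementary inclusion. The only structural difference is where you transfer: the paper lifts the statement $\dim_{R_1}(\jmath^n(X))=d$ back to an $L_{\hh_{R_1}}$- (resp.\ $L_{\oo_{R_1}}$-) sentence via the bi-interpretation and then invokes model completeness of ACQ/ACO (Theorem \ref{thm:model-complete}), whereas you stay at the level of the real coordinates and apply the hypothesis $R_1\preceq R_2$ directly to the $L_{R_1}$-formula. Since Theorem \ref{thm:model-complete} is itself deduced from $R_1\preceq R_2$ through the same bi-interpretation, the two routes are logically equivalent; yours just short-circuits the round trip. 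What your write-up adds is an explicit treatment of the uniformity issue that the paper's proof leaves implicit: the formula of Lemma \ref{lem:alg-dim} is built from generators of the vanishing ideal, so one must know that generators of $\mathcal{I}_{R_1}(Y)$ still generate $\mathcal{I}_{R_2}(Y_{R_2})$ (equivalently, that the real radical is stable under the extension of real closed fields). Your sums-of-squares remark is the right idea but is stated a bit loosely; the alternative you propose --- characterizing $\dim(Y)\geq d$ by some coordinate projection having nonempty interior, which is a single parameter-independent $L$-formula --- cleanly sidesteps the issue and is the tidier way to finish.
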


\begin{proof}
	Let $L_{\hh_{R_1}}:=L\cup\{c\}_{c\in\hh_{R_1}}$ and $L_{\oo_{R_1}}:=L\cup\{c\}_{c\in\oo_{R_1}}$ denote the language $L$ extended with coefficients in $\hh_{R_1}$ or in $\oo_{R_1}$, respectively. Consider the real algebraic subset $\jmath^n(X)$ of $R_1^{4n}$ or $R_1^{8n}$, respectively. Then, by Lemma \ref{lem:alg-dim}, $\dim_{R_1}(\jmath^n(X))=d$ can be expressed as an $L_{R_1}$-statement. By Corollary \ref{cor:bi-int} and Remark \ref{rmk:L'}, we obtain that $\dim_{R_1}(\jmath^n(X))=d$ can be expressed as an $L_{\hh_{R_1}}$-statement or as an $L_{\oo_{R_1}}$-statement satisfied by $X$, respectively. Then, by model completeness of ACQ and ACO, namely by Theorem \ref{thm:model-complete}, we obtain $\dim_{R_1}(X)=\dim_{R_2}(X_{R_2})$, as desired.
\end{proof}

\vspace{0.5em}

\subsection{Properties of ordered polynomials}

Let $R\models\textnormal{RCF}$. Let us deduce some properties of the zero loci of ordered polynomials by model completeness of ACQ and ACO. At first, we prove the Fundamental Theorem of Algebra for ordered polynomials over every model of ACQ and ACO.

\begin{notation}
	Let $R\models\textnormal{RCF}$. In this subsection we will identify the structures $\langle R_1^4, +,-,*,\underline{0},\underline{1} \rangle$ with $\langle \hh_{R}, +,-,\cdot,0,1 \rangle$ and $\langle R_1^8, +,-,*,\underline{0},\underline{1} \rangle$ with $\langle \oo_{R}, +,-,\cdot,0,1 \rangle$  in terms of Proposition \ref{prop:bi-int}. We will refer to real coordinates when interpreting semialgebraic subsets of $R^{4n}$ or $R^{8n}$ as subsets of $\hh_R^n$ or $\oo_R^n$ in terms of Corollary \ref{cor:bi-int}.
\end{notation}

We point out that next result was already known in the literature for every model of ACQ, see \cite[\S 16]{Lam01}, proven by Niven with a different strategy. However, the result was not known for models of ACO and our proof for ACQ extends to ACO.

\begin{theorem}[Fundamental Theorem of Algebra]\label{thm:FTA}
	Let $p(\mathtt{q})$ and $p'(\mathtt{o})$ be non-constant ordered polynomials with coefficients in $\hh_R$ and $\oo_R$, respectively. Then, $\mathcal{Z}_{\hh_R}(p)\subset\hh_R$ is a non-empty finite union of isolated points and non-intersecting $2$-spheres centered at real points and $\mathcal{Z}_{\oo_R}(p')\subset\oo_R$ is a finite union of isolated points and non-intersecting $6$-spheres centered at a real point.
\end{theorem}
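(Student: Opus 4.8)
The plan is to obtain the structural description over an arbitrary $R\models\textnormal{RCF}$ by transferring it from the classical case $R=\R$, where it is due to Niven \cite{Niv41} (see also \cite[\S 16]{Lam01}) for $\hh$ and to Ghiloni \cite{Ghi12} (see also \cite{GP22}) for $\oo$, using completeness of the two theories (Corollary \ref{cor:complete}). I treat $\hh_R$ only; the octonionic argument is identical once one recalls that, by Artin's theorem, a single $q\in\oo_R$ together with its conjugate generates a commutative associative subalgebra, so that the powers of $q$, its trace and its norm behave exactly as in the quaternionic case. As a preliminary remark, Corollary \ref{cor:bi-int} carries $\mathcal{Z}_{\hh_R}(p)$ through $\jmath$ to a real algebraic subset of $R^4$, so the vocabulary of \cite{BCR98} is available over $R$.

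The key is to isolate three properties, each of which is, for a \emph{fixed} degree $d$, a single parameter-free $L$-sentence with the coefficients $a_0,\dots,a_d$ universally quantified, and which therefore passes from $\hh_\R$ to $\hh_R$ by completeness. Writing $q\sim q'$ for the conjugacy relation $\exists u\,(u\neq 0\wedge uq'=qu)$, which is $L$-definable without parameters and whose classes are exactly the sets of elements sharing a trace and a norm, the three properties are: \textbf{(1)} non-emptiness, $\exists q\,(p(q)=0)$; \textbf{(2)} the local dichotomy, namely that for every root $q$ the set $\{q'\sim q:p(q')=0\}$ is either $\{q\}$ or the whole class of $q$; and \textbf{(3)} the bound that no $d+1$ pairwise non-conjugate elements are simultaneously roots. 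I stress that (3) is what makes the word \emph{finite} in the statement an elementary condition: as a negated bounded existential it is genuinely first-order, whereas finiteness of an arbitrary definable set is not.

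It then remains to read off the geometry over $R$. By (1) the locus is non-empty; by (3) only finitely many conjugacy classes meet it; and by (2) each contributing class is met either in a single point or entirely. Distinct classes are disjoint (distinct trace or norm), so the single-point contributions are isolated in $\mathcal{Z}_{\hh_R}(p)$ and the full-class contributions are pairwise non-intersecting; finally, passing to real coordinates, the conjugacy class of a non-real $q=x_0+v$ is cut out by $\textnormal{Re}=x_0$ and $|v|^2=n(q)-x_0^2>0$, i.e.\ it is a $2$-sphere (a $6$-sphere in $R^8$ for $\oo_R$) centered at the real point $x_0$, while a real root is a single real point. The step I expect to be the most delicate is precisely this last reconciliation: one must check that the conjugacy class of a non-real element really is the full sphere of prescribed trace and norm over a general real closed field --- which rests on transitivity of the relevant rotation group on spheres, a consequence of Witt's theorem valid over any $R$ --- and that the abstract dichotomy transferred in (2) coincides with the honest over-$R$ geometry. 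As an alternative to pure transfer one may reprove (1)--(3) directly over $R$: (2) via dividing $p$ by the central quadratic $\mathtt{q}^2-\textnormal{tr}(q)\mathtt{q}+n(q)$ and examining the resulting linear remainder on the sphere, and (1),(3) via the normal polynomial $N(p)=p\ast p^{c}$ (where $p^{c}$ has conjugated coefficients), of degree $2d$, which splits over the algebraically closed field $R[\sqrt{-1}]$; both routes are insensitive to non-commutativity and, by Artin's theorem, to non-associativity.
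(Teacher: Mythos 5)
Your proposal is correct and rests on the same engine as the paper's proof: for each fixed degree $d$, package the structural description of the zero locus into a single $L$-sentence (with the coefficients universally quantified), verify it over $\hh_\R$ and $\oo_\R$ using the classical results of Niven, Ghiloni et al.\ (the paper cites \cite{Niv41} and \cite[Theorem 1.3]{GSV08}), and transfer to an arbitrary model by completeness (Corollary \ref{cor:complete}). The implementations differ in two ways worth noting. First, the paper encodes the geometry directly through formulas $\beta_d,\sigma_d,\tau_d$ describing isolated points and spheres $S_R(c,r)$ in real coordinates via the bi-interpretation, which forces the sentences to carry the constants $i,j,k$ (hence to be $L_{\hh_{\overline{\Q}^r}}$-sentences); your encoding via the conjugacy relation $q\sim q'$ is parameter-free, and you then recover the sphere description afterwards from the identification of conjugacy classes with trace-and-norm level sets over any real closed field --- a step you rightly flag as needing transitivity of the relevant orthogonal action, itself transferable or provable by Witt's theorem. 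Second, and more substantively, your clause (3) (no $d+1$ pairwise non-conjugate roots) makes \emph{finiteness} an internal part of the transferred sentence, whereas the paper's $\phi_d$ does not express finiteness at all and the paper argues it separately by Noetherianity of the real Zariski topology of $R^{4n}$ via \cite[Theorem 2.8.3]{BCR98}; your version is arguably the cleaner elementary formulation of exactly the point you raise. The only detail I would ask you to make explicit is that in the octonionic case the equivalence between your relation $\exists u\,(u\neq 0\wedge uq'=qu)$ and ``same trace and norm'' requires a short Artin-subalgebra argument (or one can simply define $\sim$ via the definable trace and norm of Remark \ref{rem:conj}); with that noted, both routes, including your alternative direct proof via the normal polynomial, are sound.
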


\begin{proof}
	Let us prove that $\mathcal{Z}(p)\subset\hh_{R}$ is a non-empty finite union of isolated points and disjoint spheres $\mathbb{S}^{2}\subset\hh_{R}$, the proof for $\mathcal{Z}(p')\subset\oo_R$ is similar. Observe that the statement \emph{``Every polynomial of positive degree $\leq d$ has a root"} can be expressed as a first-order $L$-sentence $\rho_d$ for every $d\in \N$. Since $\hh_{\R}\models\rho_d$ for every $d\in \N$ by \cite{Niv41}, completeness of ACQ, namely Corollary \ref{cor:complete}, ensures that $\hh_R\models\rho_d$ for every $d\in \N$ as well. Hence, $\mathcal{Z}(p)\neq\emptyset$.
	
	Let $\beta_d(\mathtt{q})$, $\sigma_d(\mathtt{q})$ and $\tau_d$ denote the following first-order $L_{\hh_{R}}$-formulas:
	
	\begin{align*}
		\beta_d(\mathtt{q},a)&:\,\exists r\in R_{>0}\,\forall q \Big(\Big((q\in B_R(\mathtt{q},r))\vee q\neq\mathtt{q}\Big) \rightarrow \sum_{i=0}^d q^i a_i\neq 0\Big),\\
		\sigma_d(\mathtt{q},a)&:\,\exists c\in R,\,\exists r\in R_{>0}\,\Big(\mathtt{q}\in S_R(c,r)\wedge\forall q \Big((q\in S_R(c,r)) \rightarrow \sum_{i=0}^d q^i a_i = 0\Big)\wedge\\
		&\exists r_1, r_2\in R_{>0}(r\in(r_1,r_2))\wedge\Big((q\in B_R(c,r_2)\cap (B_R(c,r_1)\cup S_R(c,r))^c)\rightarrow (\sum_{i=0}^d q^i a_i \neq 0)\Big)\Big),\\
		\tau_d(a)&:\,\forall c_1,c_2\in R\,\, \forall r_1,r_2\in R_{>0}\,\Big(\\
		&\Big(\forall q ((q\in S_R(c_1,r_1)\cup S_R(c_2,r_2)) \rightarrow \sum_{i=0}^d q^i a_i = 0)\Big)\rightarrow S_R(c_1,r_1)\cap S_R(c_2,r_2)=\emptyset\Big),\\
	\end{align*}
	where $a:=(a_1,\dots,a_d)\in \hh_R^d$, $R_{>0}:=\{q\in\hh_R\,|\, q\neq 0, \exists p(p\in R)\wedge(q=p^2)\}$, $B_R(\mathtt{q},r)$ denotes the ball in $\hh_R$ of center $\mathtt{q}$ and radius $r\in R_{>0}$ and $S_R(c,r)$ the sphere in $\hh_R$ of center $c$ and radius $r\in R_{>0}$. Define the $L_{\hh_{\overline{\Q}^r}}$-sentence $\phi_d$ as follows: 
	\begin{align*}
		\phi_d:\,\forall a_0\dots\forall a_d \forall \mathtt{q}\Big(\big(\sum_{i=0}^d \mathtt{q}^i a_i=0\big)\rightarrow \big(\beta_d(\mathtt{q},a)\vee (\sigma_d(\mathtt{q},a)\wedge\tau_d(a))\big) \Big).
	\end{align*}
	Observe that $\phi_d$ is the sentence expressing the following statement: \emph{``The zero locus of each ordered polynomial of degree $\leq d$ consists of a finite number of isolated points and disjoint spheres centered at real values.".} Observe that the number of points and spheres is finite because the zero locus $\mathcal{Z}(p)\subset\hh_R$ of $p$ is also an algebraic set in real coordinates, so it has a finite number of real irreducible components, see \cite[Theorem 2.8.3]{BCR98}. Observe that $\hh\models\phi_d$ for every $d\in\N$ by \cite[Theorem 1.3]{GSV08} and $\phi_d$ is an $L_{\hh_{\overline{\Q}^r}}$-sentence by Corollary \ref{cor:bi-int} and Remark \ref{rem:Q-coeff}. Thus, completeness of ACQ, namely Corollary \ref{cor:complete}, ensures that $\hh_R\models\phi_d$ for every $d\in \N$ and for every $R\models \textnormal{RCF}$, as desired.
\end{proof}

\begin{remark}
	Observe that the formula $\beta_d(\mathtt{q},a)\vee (\sigma_d(\mathtt{q},a)\wedge\tau_d(a)$ has coefficients $a=(a_1,\dots,a_d)$ in $\hh_R$. When considering the sentence $\phi_d$ no coefficients from $\hh_R$ are needed but, in order to represent with the language of rings the semialgebraic sets $B_R(\mathtt{q},r)$ we will need to add constants $i,j,k$, see Corollary \ref{cor:bi-int}. That is the reason why $\psi_d$ turns out to be an $L_{\hh_{\overline{\Q}^r}}$-sentence.
\end{remark}

Observe that there are non-ordered polynomials with coefficients in $\hh_R$ and $\oo_R$ having empty zero locus, as the next example shows. 

\begin{example}
	Consider the polynomial $p(\mathtt{q}):=\frac{1}{16}(\mathtt{q}-i\mathtt{q}i-j\mathtt{q}j-k\mathtt{q}k)^2+1$. In the real coordinates of $\hh_R$ the polynomial $p(\mathtt{q})$ corresponds to $\mathtt{x}_0^2+1$, which has no real roots in $R^4$.
\end{example}

\begin{remark}\label{rmk:zero-set}
	The class of algebraic sets as in Definitions \ref{def:algset}\&\ref{def:algset-coeff} is strictly smaller than the class of sets that can be defined by finite systems of non-ordered polynomial equations. Consider $\hh_R$, a similar argument works for $\oo_{R}$. Consider the set $$X:=\Big\{q=x_0+ix_1+jx_2+kx_3\in\hh_R\,\Big|\,x_{0}= \frac{1}{4}\Big(q-iqi-jqj-kqk\Big)=0\Big\}$$ defined by a non-ordered polynomial equation. Observe that its real dimension is $3$ and it is not compact in the Euclidean topology of $\hh_R$. However, as a consequence of Theorem \ref{thm:FTA}, algebraic subsets of $\hh_R$ have real dimension $\leq 2$ and they are compact with respect to the euclidean topology of  $\hh_R$.
\end{remark}

Here we propose an extension of \cite[Propositions 3.15 \& 3.17]{GP22} for every model of ACQ and ACO. Again, our proof is a direct consequence of completeness of ACQ and ACO and Corollary \ref{cor:ext-dim}.

\begin{proposition}\label{prop:real-dim}
	Let $R\models\textnormal{RCF}$. Let $p(\mathtt{q}_1,\dots,\mathtt{q}_n)$ and $p'(\mathtt{o}_1,\dots,\mathtt{o}_n)$ be non-constant ordered polynomials with coefficients in $\hh_R$ and in $\oo_R$, respectively. Then
	\[
	4(n-1)\leq\dim_R(\mathcal{Z}_{\hh_{R}}(p))\leq 4n-2
	\]
	and
	\[
	 8(n-1)\leq\dim_R(\mathcal{Z}_{\oo_{R}}(p'))\leq 8n-2.
	\]
\end{proposition}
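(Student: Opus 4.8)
The plan is to transfer the bounds, which are known over $\R$ by \cite[Propositions 3.15 \& 3.17]{GP22}, to every real closed field $R$ by encoding them as first-order sentences and invoking completeness of ACQ and ACO. I will treat the quaternionic case; the octonionic one is identical upon replacing $4$ by $8$ throughout.

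First I would fix $n\in\N$ and a degree bound $d\in\N$, and parametrize the ordered polynomials $p$ of degree $\leq d$ in $n$ variables by their coefficient tuple $a=(a_\alpha)_{|\alpha|\leq d}$, writing $p_a$ for the associated polynomial and noting that the conditions ``$p_a$ is nonconstant'' and ``$\deg p_a\leq d$'' are expressed by a quantifier-free $L$-formula in the parameters $a$. Via the bi-interpretation $\jmath\colon\hh_R\to R^4$ of Proposition~\ref{prop:bi-int} and Corollary~\ref{cor:bi-int}, the family $\{\jmath^n(\mathcal{Z}_{\hh_R}(p_a))\}_a\subset R^{4n}$ is a semialgebraic family of real algebraic sets whose parameters are the real coordinates of $a$. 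The key point is that the real dimension of the members of this family is definable uniformly in $a$: as in Lemma~\ref{lem:alg-dim} the condition $\dim_R\geq e$ can be written by asserting the existence of a point at which the relevant Jacobian has the prescribed rank, or, more robustly for an arbitrary system of defining equations, by asserting that some projection of the set onto $e$ of the coordinates has nonempty interior — both being first-order over $R$ and hence, after applying Corollary~\ref{cor:bi-int} and Remark~\ref{rem:Q-coeff}, expressible in the language $L_{\hh_{\overline{\Q}^r}}$. Consequently, for each $n$ and $d$ the statement
$$\forall a\,\Big(\big(p_a\text{ nonconstant},\ \deg p_a\leq d\big)\rightarrow 4(n-1)\leq\dim_R(\mathcal{Z}_{\hh_R}(p_a))\leq 4n-2\Big)$$
is a single $L_{\hh_{\overline{\Q}^r}}$-sentence $\psi_{d,n}$.

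Next I would observe that $\hh\models\psi_{d,n}$ for every $d,n$ by \cite[Propositions 3.15 \& 3.17]{GP22}, which establish exactly these bounds over $\R$. Since ACQ is complete by Corollary~\ref{cor:complete}, every model is elementarily equivalent to $\hh$, so $\hh_R\models\psi_{d,n}$ for every $R\models\textnormal{RCF}$ and all $d,n$; the invariance of the real dimension under elementary extension recorded in Corollary~\ref{cor:ext-dim} guarantees that the quantity being bounded is intrinsic to the set and not an artifact of the chosen base field. Given an arbitrary nonconstant ordered polynomial $p$ over $\hh_R$, applying $\psi_{d,n}$ with $d=\deg p$ yields the desired inequalities. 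The same argument, run through the octonionic bi-interpretation $\jmath\colon\oo_R\to R^8$ and completeness of ACO, proves the bounds $8(n-1)\leq\dim_R(\mathcal{Z}_{\oo_R}(p'))\leq 8n-2$.

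The main obstacle I anticipate is the second step: making the dimension condition genuinely first-order and \emph{uniform} in the coefficients. Lemma~\ref{lem:alg-dim} computes the dimension from generators of the full vanishing ideal, whereas the real and imaginary parts of $p_a=0$ need not generate that ideal; one must therefore either argue that dimension is definable in semialgebraic families (a standard consequence of cell decomposition, see \cite{BCR98}) or use the projection-has-nonempty-interior characterization, which depends only on the set itself and so sidesteps the passage to the ideal. Once uniform definability of $\dim_R$ in the parameters $a$ is secured, the transfer via completeness is routine.
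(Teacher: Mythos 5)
Your proposal is correct and follows essentially the same route as the paper: parametrize ordered polynomials of degree $\leq d$ by their coefficient tuple $a$, express the dimension bounds as a first-order sentence $\forall a\,(\psi_{4(n-1)}(a)\vee\psi_{4n-3}(a)\vee\psi_{4n-2}(a))$ using the bi-interpretation and the definability of real dimension (Lemma~\ref{lem:alg-dim}, Remark~\ref{rem:Q-coeff}), verify it over $\hh$ via \cite[Propositions 3.15 \& 3.17]{GP22}, and transfer to every model by completeness (Corollary~\ref{cor:complete}). Your extra caution about making $\dim_R$ uniformly definable in the parameters $a$ (since the components of $p_a=0$ need not generate the vanishing ideal) is well placed and is resolved exactly as you suggest, by the definability of dimension for semialgebraic families.
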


\begin{proof}
	We prove the quaternionic case, the same strategy works for octonions as well. By Corollary \ref{cor:bi-int} and Lemma \ref{lem:alg-dim}, the dimension of an algebraic subset of $\hh_R^n$ is $L_{\hh_R}$-definable and, if the starting algebraic set is defined by $L$-polynomials, then its dimension is $L$-definable by Remark \ref{rem:Q-coeff}. Let $p(\mathtt{q}):=\sum_{|\alpha|\leq d} \mathtt{q}_1^{\alpha_1}\dots \mathtt{q}_n^{\alpha_n} a_\alpha$ where $\mathtt{q}:=(\mathtt{q}_1,\dots,\mathtt{q}_n)$, $\alpha\in\N^n$ and $a_\alpha\in\hh_{R}$. Let $\psi_k(a)$ be the first order $L_{\hh_R}$-formula, depending on the string of coefficients $a$ of $p(\mathtt{q})$, stating: \emph{``The zero locus of the ordered polynomial $p(\mathtt{q})$ has real dimension $k$."}.
	
	Let $\Lambda:=\{m\in\N^n\,|\,|m|\leq d\}$ and let $a:=(a_{\alpha})_{\alpha\in\Lambda}$ be a string of elements of $\hh_R$. Consider the $L$-sentence $\phi_d$ defined as:
	\[
	\phi_d:\quad \forall a \Big(\psi_{4(n-1)}(a)\vee\psi_{4n-3}(a)\vee\psi_{4n-2}(a)\Big).
	\]
	Observe that $\phi_d$ is an $L$-sentence stating: \emph{``Every ordered polynomial $p(\mathtt{q})$ of degree $\leq d$ satisfies $4(n-1)\leq\dim_R(\mathcal{Z}_{\hh_{R}}(p))\leq 4n-2$."}. By \cite[Propositions 3.15 \& 3.17]{GP22} we have $\hh\models\phi_d$ for every $d\in \N$, thus completeness of ACQ, namely Corollary \ref{cor:complete}, ensures that $\hh_R\models\phi_d$ for every $d\in \N$ and for every $R\models \textnormal{RCF}$, as desired.
\end{proof}

\begin{remark}
	The previous estimate of the real dimension of the zero loci of ordered polynomials is sharp for $\hh_R$, that is, for every $n\geq 2$ there are ordered polynomials whose zero loci have real dimensions $4(n-1),\,4n -3$ and $4n-2$. Consider the ordered polynomials $p_1(\mathtt{q}):=\mathtt{q}_1$, $p_2(\mathtt{q}):=\mathtt{q}_1^2+\mathtt{q}_2^2+1$ and $p_3(\mathtt{q}):=\mathtt{q}_1^2+1$, with $\mathtt{q}:=(\mathtt{q}_1,\dots,\mathtt{q}_n)$ for $n\geq 2$. If $R=\R$, by \cite[Example 3.16]{GP22} we have that $\dim_\R(\mathcal{Z}_{\hh_\R}(p_1))=4(n-1)$, $\dim_\R((\mathcal{Z}_{\hh_\R}(p_2))=4n-3$ and $\dim_\R(\mathcal{Z}_{\hh_\R}(p_3))=4n-2$. Observe that, by Corollary \ref{cor:ext-dim}, the ordered polynomials $p_1(\mathtt{q})$, $p_2(\mathtt{q})$ and $p_3(\mathtt{q})$ satisfy $\dim_R(\mathcal{Z}_{\hh_R}(p_1))=4(n-1)$, $\dim_R(\mathcal{Z}_{\hh_R}(p_2))=4n-3$ and $\dim_R(\mathcal{Z}_{\hh_R}(p_3))=4n-2$ for every real closed field $R$, as desired.
\end{remark}

\vspace{0.5em}

\subsection{Failure of Quantifier Elimination for the fragment of ordered formulas} This last section is devoted to studying the fragment of ACQ and ACO consisting of those $L$-formulas that can be written only by means of ordered $L$-terms. By \emph{ordered $L$-term} we mean an $L$-term in which only ordered $L$-polynomials are involved. We refer to the \emph{fragment of ordered $L$-formulas} as the fragments of ACQ and ACO in which only ordered $L$-terms occur. In principle, since the class of ordered polynomials have much stricter algebraic properties (see Theorem \ref{thm:FTA} and Proposition \ref{prop:real-dim}) it is worth studying properties of this subclass of $L$-formulas. Unfortunately, quantifier elimination does not hold for the fragment of ordered $L$-formulas, as expressed by the next result. 

\begin{theorem}\label{thm:ord-form}
	Every $L$-formula is elementarily equivalent modulo ACQ and ACO to an ordered $L$-formula. Equivalently, every $L$-definable subset of $\hh_R^n$ and $\oo_R^n$ is definable by means of an ordered $L$-formula.
\end{theorem}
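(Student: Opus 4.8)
The plan is to reduce the statement to a purely syntactic \emph{flattening} of $L$-terms: I would show that the graph of every $L$-term is definable by an ordered $L$-formula, and then assemble arbitrary formulas atom by atom. Concretely, it suffices to prove the following claim: for every $L$-term $\tau(\mathtt{q}_1,\dots,\mathtt{q}_n)$ there is an ordered $L$-formula $\theta_\tau(\mathtt{q}_1,\dots,\mathtt{q}_n,\mathtt{c})$, with $\mathtt{c}$ a fresh variable, such that in every ring (hence a fortiori modulo $\textnormal{ACQ}$ and $\textnormal{ACO}$) one has $\theta_\tau(\bar{\mathtt{q}},\mathtt{c})$ if and only if $\mathtt{c}=\tau(\bar{\mathtt{q}})$. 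Granting the claim, each atomic $L$-formula $\tau=\sigma$ is replaced by $\exists\mathtt{c}\,\exists\mathtt{d}\,(\theta_\tau(\bar{\mathtt{q}},\mathtt{c})\wedge\theta_\sigma(\bar{\mathtt{q}},\mathtt{d})\wedge\mathtt{c}=\mathtt{d})$, and since the Boolean connectives and the quantifiers over the $\mathtt{q}_\ell$ are untouched, any $L$-formula turns into an equivalent ordered $L$-formula. The \emph{equivalently} clause is then immediate, because the $L$-definable subsets of $\hh_R^n$ and $\oo_R^n$ are exactly those cut out by $L$-formulas.

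To prove the claim I would induct on the parse tree of $\tau$, attaching to each node a fresh value-variable and encoding the single operation performed there by one ordered atomic relation in the sense of Definition \ref{def:ord-L-poly}. Leaves give $\mathtt{c}=\mathtt{q}_\ell$, $\mathtt{c}=0$ or $\mathtt{c}=1$, all ordered. At a sum (resp. difference) node with child-values $\mathtt{c}_1,\mathtt{c}_2$ I set $\mathtt{c}=\mathtt{c}_1+\mathtt{c}_2$ (resp. $\mathtt{c}=\mathtt{c}_1-\mathtt{c}_2$), which are ordered of degree one. The only delicate case is a product node $\sigma_1\cdot\sigma_2$: here I introduce two brand-new variables $\mathtt{e}_1,\mathtt{e}_2$, assert the ordered degree-one equalities $\mathtt{e}_1=\mathtt{c}_1$ and $\mathtt{e}_2=\mathtt{c}_2$, and then write $\mathtt{c}=\mathtt{e}_1\mathtt{e}_2$. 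Declaring $\mathtt{e}_1$ to precede $\mathtt{e}_2$ in the variable ordering makes $\mathtt{e}_1\mathtt{e}_2$ the ordered monomial $\mathtt{e}_1^{1}\mathtt{e}_2^{1}$, so $\mathtt{c}-\mathtt{e}_1\mathtt{e}_2=0$ is an ordered atomic formula; existentially quantifying all the auxiliary variables yields $\theta_\tau$.

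The hard part is the bookkeeping concerning the fixed global order on the variables of an ordered formula: a reversed or repeated product such as $\mathtt{q}_2\mathtt{q}_1$ or $\mathtt{q}_1\mathtt{q}_2\mathtt{q}_1$ cannot be an ordered monomial in the original variables. The fresh-copy device is exactly what resolves this. Because at every product node the two factors are copied into new variables $\mathtt{e}_1,\mathtt{e}_2$ occurring nowhere else, the only ordering constraints ever generated are ``$\mathtt{e}_1$ before $\mathtt{e}_2$'' for pairwise disjoint fresh pairs, and these are simultaneously satisfiable by a single linear order (the value-variables and the original $\mathtt{q}_\ell$ appear only inside degree-one equalities and impose no constraint, so no cycle can arise). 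Thus $\mathtt{q}_2\mathtt{q}_1$ is captured by $\exists\mathtt{e}_1\exists\mathtt{e}_2(\mathtt{e}_1=\mathtt{q}_2\wedge\mathtt{e}_2=\mathtt{q}_1\wedge\mathtt{c}=\mathtt{e}_1\mathtt{e}_2)$ with $\mathtt{e}_1$ before $\mathtt{e}_2$, and analogously for every nesting.

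Finally I would stress two points. First, the construction never uses associativity: each product node performs exactly one binary multiplication of its two already-evaluated children, so the prescribed parenthesization of $\tau$ is respected verbatim, and for this reason the identical argument proves the statement for $\textnormal{ACO}$ as well as for $\textnormal{ACQ}$. Second, the resulting equivalence is a logical validity in the class of (not necessarily associative) rings, so it holds modulo $\textnormal{ACQ}$ and $\textnormal{ACO}$ without invoking completeness or model completeness; the interest of the theorem lies in its consequence rather than its depth. Combined with Example \ref{ex:qe}, it shows that the ordered fragment is as expressive as the full language $L$, and therefore inherits the failure of quantifier elimination.
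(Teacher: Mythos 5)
Your proposal is correct and is essentially the paper's own argument: both reduce to atomic formulas and then ``flatten'' a non-ordered term by existentially quantifying fresh variables tied to subterms via degree-one (hence automatically ordered) equations, arriving at the same normal form $\exists t\,\bigl(\bigwedge_i p_i(\bar{\mathtt{q}},t)=0\bigr)$ and likewise observing that the construction respects parenthesization and so covers the non-associative octonionic case. Your single structural induction on the parse tree is a cleaner packaging of the paper's double induction on the number of non-ordered monomials and of parentheses, but the key mechanism is identical.
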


\begin{proof}
	By induction on first-order $L$-formulas, it suffices to prove the statement for atomic $L$-formulas. In the case of atomic $L$-formulas, we will prove the following stronger version of the statement: \emph{Every $L$-formula $p(\mathtt{o})=0$ is elementarily equivalent modulo \textnormal{ACQ} (or \textnormal{ACO}, respectively) to an $L$-formula of the form $\exists t (\bigwedge_{i=1}^e (p_i(\mathtt{o},t)=0))$, where $\mathtt{t}$ denotes a string of new variables and each $p_i(\mathtt{o},\mathtt{t})$ is an ordered $L$-polynomial}.
	
	We just prove the latter statement in the octonionic case, the same procedure, suitably simplified, works for quaternions as well. Let $p(\mathtt{o})$ be an $L$-polynomial and consider the atomic $L$-formula $p(\mathtt{o})=0$. Let us argue by induction on the number $a$ of non-ordered $L$-monomials defining $p(\mathtt{o})$, that is, on the number of $L$-monomials of $p(\mathtt{o})$ that are not of the form $\mathtt{o}_1^{\alpha_1}\mathtt{o}_2^{\alpha_2}\dots \mathtt{o}_n^{\alpha_n}$, for some $\alpha:=(\alpha_1,\dots,\alpha_n)\in\N^n$. If $a=0$, then $p(\mathtt{o})$ is an ordered $L$-polynomial and there is nothing to prove. Suppose $p(\mathtt{o})$ has $a>0$ non-ordered $L$-monomials. Observe that such $L$-monomials are of the following form:
	\begin{equation}\label{eq:nonordered-monomial}
		(\dots(\mathtt{o}_{b_1}^{\beta_{b_1}}\dots (\dots (\mathtt{o}_{b_i}^{\beta_{b_i}}\dots\mathtt{o}_{b_{i+k}}^{\beta_{b_{i+k}}})\dots )\dots \mathtt{o}_{b_\ell}^{\beta_{b_\ell}})\dots )
	\end{equation}
	for some $\ell\in\N^*$, $\beta_{b_1},\beta_{b_2},\dots,\beta_{b_\ell}\in \N$, $b_1,b_2\dots,b_\ell\in\{1,\dots,n\}$ and for some choice of parentheses indicating when the order on the products of the variables differs from the usual ``from left to right" product. Our strategy by induction works as follows: first we eliminate the parenthesis and then we order the variables. Both procedures will require the introduction of extra variables and existential quantifiers.
	
	Let $m(\mathtt{o})$ be an $L$-monomial of $p(\mathtt{o})$ as in (\ref{eq:nonordered-monomial}). If no parenthesis occur in $m(\mathtt{o})$, that is, if the $L$-monomial $m(\mathtt{o})$ is of the form $\mathtt{o}_{b_1}^{\beta_{b_1}}\mathtt{o}_{b_2}^{\beta_{b_2}} \dots\mathtt{o}_{b_\ell}^{\beta_{b_\ell}}$, we proceed inductively by ordering the variables. Since $a>0$ and $m(\mathtt{o})$ is a non-ordered monomial, there is $h\in\{1,\dots,\ell-1\}$ such that $b_{k}<b_{k+1}$ for every $k\in\{1,\dots,h-1\}$ and $b_{h}>b_{h+1}$. Consider the ordered $L$-monomial $m'(\mathtt{o},\mathtt{t})$ defined as 
	\begin{equation}\label{eq:ordered-monomial}
		\mathtt{o}_{b_1}^{\beta_{b_1}}\mathtt{o}_{b_2}^{\beta_{b_2}}\dots \mathtt{o}_{b_h}^{\beta_{b_h}}\mathtt{t}_{1}^{\beta_{b_{h+1}}} \mathtt{t}_{2}^{\beta_{b_{h+2}}}\dots\mathtt{t}_{\ell-h'}^{\beta_{b_\ell}}
	\end{equation}
	in the ordered variables $\mathtt{o}_1,\dots,\mathtt{o_{n}},\mathtt{t}_1,\dots,\mathtt{t}_{\ell-h'}$ and define $p'(\mathtt{o},\mathtt{t})$ as the $L$-polynomial obtained by substituting the $L$-monomial $m(\mathtt{o})$ in $p(\mathtt{o})$ with the ordered $L$-monomial $m'(\mathtt{o},\mathtt{t})$ in (\ref{eq:ordered-monomial}). By construction, $p'(\mathtt{o},\mathtt{t})$ has exactly $(a-1)$ non-ordered monomials. Consider the ordered $L$-formula $\phi(\mathtt{o},\mathtt{t})$ defined as
	\[
	(p'(\mathtt{o},\mathtt{t})=0)\wedge\Big(\bigwedge_{k=1}^{\ell-h}(\mathtt{t}_{k}-\mathtt{o}_{b_{k+h}}=0)\Big).
	\]
	Observe that the $L$-formulas $p(\mathtt{o})=0$ and $\exists t \phi(\mathtt{o},t)$ are elementarily equivalent modulo ACO and each $L$-polynomial $\mathtt{t}_{k}-\mathtt{o}_{b_{k+h}}$ is ordered. Then, by inductive assumption on the number $(a-1)$ of non-ordered $L$-monomials occurring in the polynomial $p'(\mathtt{o},\mathtt{t})$, we get an $L$-formula $\phi'(\mathtt{o},\mathtt{t})$ of the form $\exists t' (\bigwedge_{i=1}^e (p'_i(\mathtt{o},\mathtt{t},t')=0))$ for some $e\in\N$, some string of new variables $\mathtt{t}'$ and for some ordered $L$-polynomials $p'_i(\mathtt{o},\mathtt{t},\mathtt{t}')$ for every $i\in\{1,\dots,e\}$, such that $p'(\mathtt{o},\mathtt{t})=0$ and $\phi'(\mathtt{o},\mathtt{t})$ are elementarily equivalent modulo ACO. Hence, the $L$-formulas $\Phi(\mathtt{o})$ defined as $\exists t \exists t' (((\bigwedge_{i=1}^e (p'_i(\mathtt{o},{t},t')=0)))\wedge(\bigwedge_{k=1}^{\ell-h}(t_{k}-\mathtt{o}_{b_{k+h}}=0)))$ and $p(\mathtt{o})=0$ are elementarily equivalent modulo ACO and all $L$-polynomials appearing in $\Phi(\mathtt{o})$ are ordered, as desired.
	
	Assume there are $b>0$ left-parenthesis occurring in the $L$-monomial $m(\mathtt{o})$ as in (\ref{eq:nonordered-monomial}). Choose the first right-parenthesis and consider the $L$-monomial $n_1(\mathtt{o})$ enclosed by the preceeding left-parenthesis, that is,
	\[
	m(\mathtt{o})=(((\mathtt{o}_{b_1}^{\beta_{b_1}}(\dots(n_1(\mathtt{o}))\mathtt{o}_{b_i}^{\beta_{b_i}}) \dots )\mathtt{o}_{b_\ell}^{\beta_{b_\ell}}))
	\]
	for some $L$-monomial $n_1(\mathtt{o})$ in which no parenthesis occur. Consider the $L$-monomial $m'(\mathtt{o},\mathtt{s})$ obtained by substituting the $L$-monomial $(n_1(\mathtt{o}))$ by an extra variable $\mathtt{s}_1$ and define $p'(\mathtt{o},\mathtt{s}_1)$ as the $L$-polynomial obtained by replacing the $m(\mathtt{o})$ in $p(\mathtt{o})$ with $m'(\mathtt{o},\mathtt{s}_1)$. Consider the $L$-formula $\psi(\mathtt{o},\mathtt{s}_1)$ defined as:
	\[
	(p'(\mathtt{o},\mathtt{s}_1)=0)\wedge(\mathtt{s}_1-n_1(\mathtt{o})=0).
	\]
	Observe that the formulas $p(\mathtt{o})=0$ and $\exists s_1 \psi(\mathtt{o},s_1)$ are elementarily equivalent modulo ACO. Moreover, by construction, there are exactly $(b-1)$ left-parenthesis occurring the $L$-monomial $m'(\mathtt{o},\mathtt{s})$ of $p'(\mathtt{o},\mathtt{s})$ and there are no parenthesis appearing in the $L$-monomials $\mathtt{s}_1$ and $n_1(\mathtt{o})$. Then, by inductive assumption on the number $(b-1)$ of left-parenthesis occurring in the $L$-monomial $m'(\mathtt{o},\mathtt{s}_1)$ of $p'(\mathtt{o},\mathtt{s}_1)$, there exists an $L$-formula $\psi'(\mathtt{o},\mathtt{s})$ of the form 
	\[
	(p^{(b)}(\mathtt{o},\mathtt{s})=0)\wedge\bigwedge_{i=1}^b (\mathtt{s}_i-n_{i}(\mathtt{o},\mathtt{s}_1,\dots,\mathtt{s}_{i-1})=0)
	\]
	 for some string of new variables $\mathtt{s}:=(\mathtt{s_1},\dots,\mathtt{s}_b)$, for some $L$-monomials $n_{i}(\mathtt{o},\mathtt{s}_1,\dots,\mathtt{s}_{i-1})$ in which no parenthesis occur and for some $L$-polynomial $p^{(b)}(\mathtt{o},\mathtt{s})$ having $a$ non-ordered $L$-monomials but one of them, say $m^{(b)}(\mathtt{o},\mathtt{s})$, does not have any parenthesis, such that $\exists s \psi'(\mathtt{o},s)$ and $p(\mathtt{o})=0$ are elementarily equivalent modulo ACO. Thus, we may order the variables of the non-ordered $L$-monomials without parenthesis $n_{i}(\mathtt{o},\mathtt{s})$ for $i\in\{1,\dots,b\}$ and $m^{(b)}(\mathtt{o},\mathtt{s})$ of $p^{(b)}(\mathtt{o},\mathtt{s})$ as in the case $b=0$ showing that there exists an $L$-formula $\Phi(\mathtt{o},\mathtt{s},\mathtt{t})$ of the form
	\[
	\Big((p^{(b+1)}(\mathtt{o},\mathtt{s},\mathtt{t})=0)\wedge\bigwedge_{j=1}^\ell q_j(\mathtt{o},\mathtt{s},\mathtt{t})\Big)\wedge \bigwedge_{i=1}^{b} \Big((s_i-n'_{i}(\mathtt{o},\mathtt{s},\mathtt{t})=0)\wedge\bigwedge_{k=1}^{\ell_i}(r_k(\mathtt{o},\mathtt{s},\mathtt{t})=0)\Big)
	\]
	 for some string of new variables $\mathtt{t}:=(\mathtt{t_1},\dots,\mathtt{t}_c)$ such that:
	 \begin{enumerate}[label={(\roman*)}, ref=(\roman*)]
	 	\item $\exists t\exists s \Phi(\mathtt{o},s,t)$ and  $p(\mathtt{o})=0$ are elementarily equivalent modulo ACO,
	 	\item\label{en:ind} every $L$-polynomial occurring in $\Phi(\mathtt{o},\mathtt{s},\mathtt{t})$ is ordered apart from $p^{(b+1)}(\mathtt{o},\mathtt{s},\mathtt{t})$, which has exactly $(a-1)$ non-ordered $L$-mononials.
	 \end{enumerate}
	 Thus, by applying the inductive assumption to the atomic $L$-formula $p^{(b+1)}(\mathtt{o},\mathtt{s},\mathtt{t})=0$, we get an $L$-formula of the form $\exists s \exists t \exists t' (\bigwedge_{i=1}^e (p_i(\mathtt{o},s,t,t')=0))$ for some string of new variables $\mathtt{t}':=(\mathtt{t}'_1,\dots,\mathtt{t}'_d)$ and for some ordered $L$-polynomials $p_i(\mathtt{o},\mathtt{s},\mathtt{t},\mathtt{t}')$ for $i\in\{1,\dots,e\}$, which is elementarily equivalent to $p(\mathtt{o})=0$ modulo ACO, as desired.
\end{proof}

\begin{remark}\label{rmk:ordered-def}
	A similar statement analogous to Theorem \ref{thm:ord-form} holds for $L'$-formulas, where $L'$ denotes $L\cup\{q_i\}_{i\in\hh_R}$ and $L\cup\{o_i\}_{i\in\oo_R}$, respectively. Indeed, it suffices to apply the same substitution procedure to the coefficients that may occur in non-ordered $L'$-monomials between two variables.
\end{remark}

As a direct consequence of Theorem \ref{thm:ord-form} we deduce the following result.

\begin{corollary}
	The $L$-theories $\textnormal{ACQ}$ and $\textnormal{ACO}$ do not have quantifier elimination for the fragment of ordered $L$-formulas.
\end{corollary}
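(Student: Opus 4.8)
The plan is to derive the failure of quantifier elimination for the fragment of ordered $L$-formulas from the failure of quantifier elimination in the full language $L$, which was already exhibited in Example \ref{ex:qe}, using Theorem \ref{thm:ord-form} as a bridge. The crucial (and elementary) observation is that a quantifier-free ordered $L$-formula is, \emph{a fortiori}, a quantifier-free $L$-formula, since ordered $L$-polynomials are in particular $L$-terms. Hence any quantifier elimination carried out \emph{within} the fragment would automatically produce a quantifier elimination in the full language $L$.

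Concretely, I would argue by contradiction. Suppose the fragment of ordered $L$-formulas admits quantifier elimination modulo ACQ, that is, every ordered $L$-formula is equivalent modulo ACQ to a quantifier-free ordered $L$-formula. Consider the $L$-formula $\varphi(\mathtt{q})$ of Example \ref{ex:qe}, defining the set $\iota(R_{\geq 0})\subset\hh_R$, which was shown there not to be equivalent modulo ACQ to any quantifier-free $L$-formula. By Theorem \ref{thm:ord-form}, $\varphi$ is elementarily equivalent modulo ACQ to some ordered $L$-formula $\psi(\mathtt{q})$. Applying the assumed quantifier elimination to $\psi$ yields a quantifier-free ordered $L$-formula $\chi(\mathtt{q})$ equivalent to $\psi$, and hence to $\varphi$, modulo ACQ. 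As noted above, $\chi$ is in particular a quantifier-free $L$-formula, so $\varphi$ turns out to be equivalent modulo ACQ to a quantifier-free $L$-formula, contradicting Example \ref{ex:qe}. The same reasoning applies verbatim to ACO, since Example \ref{ex:qe} records that the identical $L$-formula disproves quantifier elimination in the octonionic case as well.

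I do not expect any genuine obstacle at this stage: all the technical weight has already been absorbed into Theorem \ref{thm:ord-form}, whose inductive elimination of parentheses and reordering of variables is the demanding part, and into the counterexample of Example \ref{ex:qe}. The corollary itself is a purely formal logical deduction. The only point deserving explicit mention is the inclusion of the class of quantifier-free ordered $L$-formulas into the class of all quantifier-free $L$-formulas, which is exactly what lets the two negative results combine.
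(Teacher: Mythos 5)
Your argument is correct and is precisely the intended one: the paper states the corollary as a direct consequence of Theorem \ref{thm:ord-form}, the implicit reasoning being exactly your combination of that theorem with Example \ref{ex:qe} via the observation that a quantifier-free ordered $L$-formula is in particular a quantifier-free $L$-formula. Your write-up merely makes explicit the contradiction the paper leaves to the reader.
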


Latter corollary shows that restricting our interest to ordered $L$-formulas is not useful to understand which $L$-formulas admit quantifier elimination. On the contrary, when restricting to quantifier-free $L$-formulas with parameters in a fixed model, then there are sets that are definable by means of a quantifier-free $L$-formula with parameters but they are not definable by any ordered quantifier-free $L$-formula with parameters, as it is shown by the following example.

\begin{example}
	Let $R\models \textnormal{RCF}$ and $L_{\hh_{R}}:=L\cup\{c\}_{c\in\hh_R}$. Consider the subset $\mathcal{Z}_{\hh_R}(p)\subset\HH_R$, with $p:=\mathtt{q}-i\mathtt{q}i-j\mathtt{q}j-k\mathtt{q}k$, as in Remark \ref{rmk:zero-set}. By Theorem \ref{thm:FTA}, the zero set of a non-null ordered $L_{\hh_{R}}$-polynomial in one variable has real dimension $0$ or $2$ as real algebraic subsets of $R^4$. As a consequence, those subsets of $\HH_R$ defined by quantifier-free $L_{\hh_{R}}$-formulas have real dimension $d\in\{0,1,2,4\}$ as locally closed subsets of $R^4$. Since $\dim_R(X)=3$, we deduce that $X$ is quantifier-free $L_{\hh_{R}}$-definable but it is not ordered quantifier-free $L_{\hh_{R}}$-definable. 
 \end{example}
 
 Observe that the proof of Theorem \ref{thm:ord-form} in the case of atomic $L$-formulas, together with Remark \ref{rmk:ordered-def}, has a direct geometric interpretation since it allows to characterize (basic) algebraic sets, both over $\hh_R$ and $\oo_R$, up to real biregular isomorphism.

 \begin{theorem}\label{thm:iso-real}
 	Every real algebraic subset $X\subset R^n$ can be realized by a basic algebraic subset $X_\hh$ of $\hh_{R}^m$ and $X_\oo$ of $\oo_{R}^{m'}$, for some $m\geq 4n$ and $m'\geq 8m$. More precisely, there exist basic algebraic sets $X_\hh\subset\hh_{R}^m$ and $X_\oo\subset\oo_{R}^{m'}$ such that $X\subset R^n$, $\jmath^m(X_\hh)\subset R^{4m}$ and $\jmath^{m'}(X_\oo)\subset R^{8m'}$ are biregularly isomorphic as real algebraic sets.
 \end{theorem}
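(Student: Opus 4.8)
The plan is to realize $X$ as the graph of a polynomial map over a scalar copy of itself, and then to orderify the defining equations using Theorem \ref{thm:ord-form}. I only treat the quaternionic case; the octonionic one is identical after replacing $\jmath\colon\hh_R\to R^4$ by $\jmath\colon\oo_R\to R^8$ and the real-part formula (\ref{eq:bi-q_1}) by (\ref{eq:bi-o}). Write $X=\{x\in R^n\,|\,f_1(x)=\dots=f_k(x)=0\}$ with $f_s\in R[\mathtt{x}_1,\dots,\mathtt{x}_n]$. First I would introduce quaternion variables $\mathtt{q}_1,\dots,\mathtt{q}_n$ and consider the quaternion polynomial equations expressing, on one hand, that each $\mathtt{q}_t$ is central (``real''), namely $\mathtt{q}_t=\tfrac14(\mathtt{q}_t-i\mathtt{q}_t i-j\mathtt{q}_t j-k\mathtt{q}_t k)$ in view of (\ref{eq:bi-q_1}), and on the other hand that $f_s(\mathtt{q}_1,\dots,\mathtt{q}_n)=0$ for every $s$. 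Since central elements commute and associate with every element, the common solution set of this system in $\hh_R^n$ is exactly $\iota^n(X)=\{(x_1,\dots,x_n)\,|\,x_t\in R,\ f_s(x)=0\}$, so that $\jmath^n$ carries it bijectively, and biregularly in real coordinates, onto a coordinate copy of $X$ inside $R^{4n}$.

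The equations just written are in general non-ordered, both because of the two-sided occurrences $i\mathtt{q}_t i$, $j\mathtt{q}_t j$, $k\mathtt{q}_t k$ in the reality constraints and because the monomials of $f_s$ need not be in increasing order. I would then apply the atomic-formula form of Theorem \ref{thm:ord-form}, together with Remark \ref{rmk:ordered-def} to absorb the coefficients $i,j,k$ and those of the $f_s$, to each of these equations. This replaces every non-ordered equation $g(\mathtt{q})=0$ by an equivalent (modulo \textnormal{ACQ}) system $\bigwedge_i p_i(\mathtt{q},\mathtt{t})=0$ of ordered polynomial equations, at the cost of finitely many new quaternion variables $\mathtt{t}$. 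Crucially, inspecting the proof of Theorem \ref{thm:ord-form}, every auxiliary variable is introduced through an equation of the form (new variable)$\,-\,$(a monomial in the variables introduced so far)$\,=0$, so that on the solution set each $\mathtt{t}$-coordinate is a quaternion-polynomial function of $(\mathtt{q}_1,\dots,\mathtt{q}_n)$. Collecting all auxiliary variables into a tuple of length $m-n$ and conjoining all the resulting ordered equations defines a basic algebraic set $X_\hh\subset\hh_R^m$, where a bookkeeping of the auxiliary variables needed to orderify the $3n$ reality constraints yields $m\geq 4n$.

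It remains to establish biregularity. By the previous observation the projection $\pi\colon\hh_R^m\to\hh_R^n$ forgetting the auxiliary coordinates restricts to a bijection from $X_\hh$ onto $\iota^n(X)$ whose inverse is the graph map $q\mapsto(q,\mathtt{t}(q))$, itself given by quaternion polynomials. Passing to real coordinates through $\jmath$ and invoking Corollary \ref{cor:bi-int}, which turns quaternion-polynomial maps into real-polynomial maps on real coordinates, $\pi$ and its inverse become mutually inverse real regular maps. Hence $\jmath^m(X_\hh)\subset R^{4m}$ is biregularly isomorphic to the coordinate copy of $X$ in $R^{4n}$, and therefore to $X$ itself. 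The octonionic statement follows verbatim, using (\ref{eq:bi-o}) in place of (\ref{eq:bi-q_1}) and the seven imaginary units $e_1,\dots,e_7$; the larger number of two-sided terms is what produces the bound $m'\geq 8m$, and since $\jmath^{m'}(X_\oo)$ is again biregular to $X$, all three sets are mutually biregular.

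The main obstacle is not the orderification itself --- that is exactly Theorem \ref{thm:ord-form} --- but the upgrade from elementary equivalence to a genuine \emph{biregular} isomorphism. This hinges on the fact that the auxiliary variables supplied by the proof of Theorem \ref{thm:ord-form} are functionally, indeed polynomially, determined by the original ones, so that the enlarged set is the graph of a polynomial map rather than a mere projection of $X$; isolating and exploiting this functional form is the delicate point. A secondary check is that the reality constraints together with $f_s(\mathtt{q})=0$ cut out precisely $\iota^n(X)$ with no spurious components, which holds because centrality forces the imaginary parts to vanish and then $f_s$ evaluates as the genuine real polynomial.
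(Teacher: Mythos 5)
Your proof is correct and follows essentially the same route as the paper's: encode $X$ inside a quaternionic (resp.\ octonionic) affine space by possibly non-ordered polynomial equations, then invoke the orderification procedure of Theorem \ref{thm:ord-form} together with Remark \ref{rmk:ordered-def} and observe that the auxiliary variables it introduces are polynomially determined by the original ones, so the resulting basic algebraic set is a graph and the forgetful projection is a biregular isomorphism in real coordinates. The only difference is in the initial encoding and is immaterial: the paper packs four real coordinates into each quaternion variable by padding $X$ to $X\times\{0\}\subset R^{4s}$ and substituting the component-extraction formulas (\ref{eq:bi-q_1})--(\ref{eq:bi-q_2}) into a single defining polynomial, whereas you use one quaternion variable per real coordinate together with explicit centrality constraints; both encodings feed into the same orderification-plus-graph argument.
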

 
 \begin{proof}
 	We just prove the quaternionic case, a similar argument works for octonions as well just by using the formulas (\ref{eq:bi-o}) instead of (\ref{eq:bi-q_1})\&(\ref{eq:bi-q_2}) in what follows. Let $X\subset R^n$ be a real algebraic set. Consider $s\in\N$ be the smallest integer such that $n\leq 4s$ and consider $X':=X\times\{0\}\subset R^n\times\{0\}\subset R^n\times R^{4s-n}=R^{4s}$. Of course $X$ and $X'$ are biregularly isomorphic. Choose $p\in R[\mathtt{x}_1,\dots,\mathtt{x}_{4s}]$ such that $\mathcal{Z}_{R^{4s}}(p)=X'$, for instance by taking the sum of squares of a set of generators of its vanishing ideal. Let $\jmath:\hh_R\to R^4$ be the interpretation of Proposition \ref{prop:bi-int} and consider the set $(j^s)^{-1}(X')\subset\hh_R^s$. Clearly, by Corollary \ref{cor:bi-int}, $(j^s)^{-1}(X')\subset\hh_R^s$ is $L_{\hh_R}$-definable. On the other hand, since $X'\subset R^{4s}$ is algebraic, we have an explicit simple description of $(j^s)^{-1}(X')\subset\hh_R^s$. Define the (non-ordered) $L_{\hh_R}$-polynomial $p'(\mathtt{q}_1, \dots, \mathtt{q}_s)$ just by substituting the real variables of $p(\mathtt{x}_1,\dots,\mathtt{x}_{4s})$ in the following way:
 	\begin{align*}
		\mathtt{x}_{4t+1}&=\frac{1}{4}(\mathtt{q}_t-i\mathtt{q}_t i-j\mathtt{q}_t j-k\mathtt{q}_t k),\quad \mathtt{x}_{4t+2}=\frac{1}{4i}(\mathtt{q}_t-i\mathtt{q}_t i+j\mathtt{q}_t j+k\mathtt{q}_t k),\\
 		\mathtt{x}_{4t+3}&=\frac{1}{4j}(\mathtt{q}_t+i\mathtt{q}_t i-j\mathtt{q}_t j+k\mathtt{q}_t k,),\quad \mathtt{x}_{4t+4}=\frac{1}{4k}(\mathtt{q}_t+i\mathtt{q}_t i+j\mathtt{q}_t j-k\mathtt{q}_t k)
 	\end{align*}
 	for every $t\in\{0,\dots,s-1\}$. Hence, $(j^s)^{-1}(X')=\mathcal{Z}_{\hh_{R}^s}(p')$. Then, an application of the proof of Theorem \ref{thm:ord-form}, together with Remark \ref{rmk:ordered-def}, gives a basic algebraic set $X_{\hh}\subset\hh_R^m$ such that $\pi_\hh(X_\hh)=(j^s)^{-1}(X')$, where $\pi_\hh:\hh_R^m\to\hh_R^s$ denotes the projection onto the first $s$ coordinates, since existential quantifiers correspond to projections. Observe that, since the strategy of Theorem \ref{thm:ord-form}'s proof relies in substituting new variables to non-ordered $L$-monomials appearing in $p'(\mathtt{q})$, the resulting basic algebraic set $X_\hh\subset \hh_R^m$ is actually the graph of a ordered $L_{\hh_R}$-polynomial function $P:\hh_R^s\to\hh_R^m$ restricted to $(j^s)^{-1}(X')$. Thus, $\pi_\hh|_{X_\hh}$ is invertible and its inverse $(\pi_\hh|_{X_\hh})^{-1}:X'\to X_\hh$ has entries given by ordered $L_{\hh_R}$-polynomials. This ensures that $\pi|:\jmath^m(X_\hh)\to X$ is a biregular isomorphism between real algebraic sets, where $\pi:R^{4m}\to R^{4s}$ is the canonical projection onto the first $4s$ coordinates.
 \end{proof}
 
\section*{Acknowledgements}

The author would like to thank Stefano Baratella, Antonio Carbone and Riccardo Ghiloni for valuable discussions and remarks during the drafting process. The author would like to thank the anonymous referee for his careful reading of the manuscript and for his suggestions, which have improved the presentation of this article.

\printbibliography

\end{document}